\newtheorem{theorem}{Theorem}
\newtheorem{definition}[theorem]{Definition}
\newtheorem{lemma}[theorem]{Lemma}
\newtheorem{proposition}[theorem]{Proposition}
\newtheorem{remark}[theorem]{Remark}
\newcommand{\E}{\mathbb E}
\newcommand{\F}{\mathcal F}
\newcommand{\I}[1]{\mathbb I_{\{#1\}}}
\renewcommand{\P}{\mathbb P}
\newcommand{\kom}[1]{}
\renewcommand{\kom}[1]{{\bf [#1]}}
\newcounter{komcounter}
\numberwithin{komcounter}{section}
\title{How to detect a salami slicer: a stochastic controller-stopper game with unknown competition}
\author{Erik Ekström\\Department of Mathematics, Uppsala University\\ \\ Kristoffer Lindensjö\\Department of Mathematics, Stockholm University\\ \\Marcus Olofsson\\Department of Mathematics, Uppsala University
}
\begin{document}

\maketitle

\begin{abstract}
We consider a stochastic game of control and stopping specified in terms of a process $X_t=-\theta \Lambda_t+W_t$, representing the holdings of Player~1, where $W$ is a Brownian motion, $\theta$ is a Bernoulli random variable indicating whether Player~2 is active or not, and $\Lambda$ is a non-decreasing process representing the accumulated ``theft'' or ``fraud'' performed by Player 2 (if active) against Player~1. Player 1 cannot observe $\theta$ or $\Lambda$ directly, but can merely observe the path of the process $X$ and may choose a stopping rule $\tau$ to deactivate Player~2 at a cost $M$. Player 1 thus does not know if she is the victim of fraud and operates in this sense under unknown competition. Player~2 can observe both $\theta$ and $W$ and seeks to choose the fraud strategy $\Lambda$ that maximizes the expected discounted amount  
\[ \E \left [ \theta\int _0^{\tau} e^{-rs}  d\Lambda_s \right ],\]
whereas Player 1 seeks to choose the stopping strategy $\tau$ so as to minimize the expected discounted cost
\[\E  \left [\theta \int _0^{\tau} e^{-rs} d\Lambda_s   + e^{-r\tau}M\I{\tau<\infty} \right ].\]
This non-zero-sum game appears to be novel and is motivated by applications in fraud detection; it combines filtering (detection), non-singular control, stopping, strategic features (games) and asymmetric information. We derive Nash equilibria for this game; for some parameter values we find an equilibrium in pure strategies, and for other parameter values we find an equilibrium by allowing for randomized stopping strategies.
\end{abstract}

\section{Introduction}\label{intro}
``Salami slicing'' is a well known concept of fraud, where a thief or a fraudster repeatedly steals small amounts - so small that the net loss from the system is hard to detect at each instant. Albeit a small intensity of theft, aggregated over time or over a large number of victims, the accumulated amount stolen may become large if not detected in time.
The essence of a salami slicing strategy is thus to trade-off between maximizing short-time profits and avoiding detection. 
Classical examples include penny shaving, i.e., consistently rounding off a large number of transactions up to the nearest penny and stealing the difference (cf. \cite{K}), modifying the fuel tank capacity of rental cars so that customers pay for slightly more fuel than they consume at each rental (see \cite{BKW}),
and stealing one coin from each fare box of a public transportation agency (see \cite{H}).
Further examples in the same spirit can easily be thought of in the context of, e.g., poaching of game or fish (where the poacher would trade-off between an offensive strategy and being detected), a spy who steals classified information (again, a balance between exploitation and detection is a natural ingredient),
computer networks with possible information leakage or botnet defense (cf. \cite[Sec. 7.1.10]{CD} and \cite{KB} where mean field games of botnet defense are studied). Yet another related example is in the context of online privacy issues, where individuals knowingly
give away small pieces of information about themselves, but would refuse to do so if they knew that this information was fully exploited against them.

An essential ingredient in a salami slicing strategy is the possibility to avoid detection by the presence of natural stochastic fluctuations in the underlying system; an observed fluctuation may then be due to such stochasticity, or stem from a fraudster being active (or from a combination of both). In this paper, we model the natural fluctuations by a Brownian motion and assume that the account holder, i.e., the possible victim and fraud detector,  can only observe the total effect of natural fluctuations and the accumulated theft. Moreover, we equip the account holder with the possibility to deactivate the potential fraudster at any time at a given cost $M$. As discussed above, there is then a natural trade-off for the fraudster between stealing large amounts and being detected. From the perspective of fraud detection on the other hand, the account holder has to balance the losses of potential fraud
against the cost of deactivation.

\subsection{Mathematical problem formulation}

Let $(\Omega, \F, \P)$ be a probability space on which a standard Brownian motion $W = (W_t)_{t\geq 0}$ and an independent Bernoulli random variable $\theta$ with $\P(\theta=1)=p=1-\P(\theta=0)$ are given, where $p\in(0,1)$. 
We consider a two-player stochastic game in which the players are referred to as ``account holder'' (Player $1$) and ``fraudster'' (Player $2$). Let the stochastic process $X= (X_t)_{t\geq 0}$ be given by
\begin{equation*}\label{eq:SDE}
X_t =- \theta \Lambda_t + W_t,
\end{equation*} 
where $\Lambda= (\Lambda_t)_{t\geq 0}$ is a non-decreasing stochastic process chosen by the fraudster. The interpretation is that $X$ represents the wealth of the account holder, $\theta$ is an indicator of whether the fraudster is active ($\theta=1$) or not ($\theta=0$), $p$ represents the account holder's initial belief that the fraudster is active, and $\Lambda$ corresponds to the accumulated amount stolen by the fraudster (if active).

%Since a jump in the process $X$ would reveal the existence of the fraudster, we 
%will assume that $\Lambda$ is a continuous process. 
We assume that the strategy $\Lambda$ of the fraudster is $\F^{W,\theta}$-adapted, where 
$\F^{W,\theta} = (\F^{W,\theta}_t)_{t\geq 0}$ is the augmented filtration generated by $W$ and $\theta$. 
The interpretation is that the fraudster knows whether he is active or not, as well as 
observes the natural fluctuations, i.e., $W$, in the account, and based on this information decides how to steal.

The account holder, on the other hand, is assumed not to have direct access to the underlying Brownian motion $W$, but can merely observe the path of the aggregate process $X$. The account holder is equipped with a stopping control as follows: 
at any time, she may choose to deactivate the fraudster by paying a fixed amount $M>0$, thereby ruling out 
the possibility for an active fraudster to continue stealing. The control of the account holder is thus represented by a random 
time. In Sections~\ref{intro}-\ref{sec:pure}, the account holder strategy will be chosen from the class of stopping times with respect to $\F^X = (\F^X_t)_{t\geq 0}$, which is the augmented filtration generated by $X$. Then, in Section~\ref{sec:mixed}, we will also consider randomized stopping times. 
%
%As mentioned above, the parameter $p$ represents the account holder's initial belief that the fraudster exists, and based on observations of $X$, the account holder will dynamically update this belief; cf. Section \ref{sec:filter}. 

%Recall that in game theory, a pure strategy is a rule that determines the actions of an agent without randomization, and that a mixed strategy randomly chooses a pure strategy. 
%
%The above motivates the following definition:
%  
 
\begin{definition} %\label{def:pure-strat} 
A continuous $\F^{W,\theta}$-adapted
non-decreasing process $\Lambda$  with $\Lambda_{0}=0$ is said to be a fraud strategy and an $\F^X$-stopping time $\tau$ is said to be a stopping strategy. The set of fraud strategies is denoted by $\mathbb L$, and the set of $\F^X$-stopping times 
is denoted by $\mathbb T$.
Given a pair $(\tau,\Lambda)\in\mathbb T\times \mathbb L$, the expected cost for the account holder is defined as 
\begin{equation} \label{E-cost}
\mathcal J^1(\tau,\Lambda;p) = \E \left [\theta \int _0^{\tau} e^{-rs} d\Lambda_s   + e^{-r\tau}M\I{\tau<\infty} \right ],
\end{equation}
and the expected payoff for the fraudster is defined as 
\begin{equation}\label{E-payoff}
\mathcal J^2(\tau,\Lambda;p) =  \E \left [ \theta\int _0^{\tau} e^{-rs}  d\Lambda_s \right ],
\end{equation}
where the discount rate $r$ is a positive constant. 
\end{definition}

\begin{remark}\label{precedence:rem}
For admissibility, we require that the fraud strategy $\Lambda$ is continuous.
An alternative specification would allow for  
strategies that are merely right-continuous with left limits, where then additional
care in the definition of ${\mathcal J}^1$ and ${\mathcal J}^2$ in the case of a lump sum payment and simultaneous stopping is needed. 
For example, replacing the upper limit of integration in \eqref{E-cost} and \eqref{E-payoff} by $\tau-$ would correspond to
a specification giving precedence to the stopper. However, since a jump in $X$ would immediately reveal the existence of the fraudster, it is 
(at least heuristically) clear that the account holder would stop at the first jump time, thereby reducing that set-up to the present case with only continuous fraud strategies.
\end{remark}

In the above set-up the account holder naturally seeks to choose a strategy $\tau$ to minimize the cost \eqref{E-cost}, whereas the fraudster seeks to choose a strategy $\Lambda$ to maximize \eqref{E-payoff}, and we define a Nash equilibrium 
accordingly.

\begin{definition}[Nash equilibrium]
\label{NE} 
A pair of strategies $(\tau^\ast,\Lambda^\ast)\in\mathbb T\times \mathbb L$ is a Nash equilibrium (NE) if it satisfies
\begin{equation}\label{NEineq}
\left\{\begin{array}{ll}
\mathcal J^1(\tau^\ast, \Lambda^\ast;p) \leq  \mathcal J^1(\tau, \Lambda^\ast;p)\\
\mathcal J^2(\tau^\ast, \Lambda^\ast;p) \geq  \mathcal J^2(\tau^\ast, \Lambda;p)\end{array}\right.
\end{equation}
for any pair $(\tau,\Lambda)\in\mathbb T\times \mathbb L$.
%\footnote{I think that writing 'any p' here might be a bit shaky, since $\tau^\ast$ \emph{depends} on $p$ in a certain sense... This can probably be fixed by introducing the shift operator, but I think this would take away focus from the main idea of the paper, and that a better solution would be to let $p$ be fixed (but arbitrary) throughout the paper, and thus drop it in e.g. $\eqref{E-cost}$?... }
\end{definition}

\begin{remark}
\label{rem}
We have equipped the fraudster with the filtration $\F^{W,\theta}$ so that any fraudster strategy is on the form
$\Lambda=\Lambda^0 1_{\{\theta=0\}} + \Lambda^1 1_{\{\theta=1\}}$ for non-decreasing $\F^{W}$-adapted processes 
$\Lambda^0$ and $\Lambda^1$. However, since neither $\mathcal J^1$ nor $\mathcal J^2$ depends on $\Lambda^0$, but only on $\Lambda^1$, 
the specification of $\Lambda^0$ is superfluous. 
In fact, a game which is strategically equivalent to the one introduced is obtained if $\mathcal J^2$ is replaced with 
\[\hat{\mathcal J}^2(\tau,\Lambda;p):=\E \left [\left. \int _0^{\tau} e^{-rs}  d\Lambda_s \right\vert \theta=1\right ].\]
The functionals $\mathcal J^2$ and $\hat{\mathcal J}^2$ have the following interpretations. Imagine that before the game starts, i.e., at time 
$t=0-$, neither player knows $\theta$, and that the value of $\theta$ will be revealed to Player~2 at $t=0$.
$\mathcal J^2$ is then the expected payoff for Player~2 at time $0-$, whereas $\hat{\mathcal J}^2$ is the 
expected payoff at time 0 in case $\theta=1$. These games are referred to as the {\em ex ante} game and the 
{\em interim} version of the game, respectively (see \cite{AM,Ha} for classical theory of games under incomplete information).
Also note that $\mathcal J^2(\tau,\Lambda;p)=p\hat{\mathcal J}^2(\tau,\Lambda;p)$ and that, in the definition of a NE, the second inequality in \eqref{NEineq}
can be replaced by $\hat{\mathcal J}^2(\tau^\ast, \Lambda^\ast;p) \geq  \hat{\mathcal J}^2(\tau^\ast, \Lambda;p)$. 
\end{remark}

\begin{remark}
An essential feature of our game is that the account holder cannot observe the strategy
$\Lambda$ used by the fraudster, but can only observe the path of the aggregate process $X$. Our setup and terminology thus differs from that of, e.g., \cite{CR} and \cite{PTZ}, where a strategy is a map from the set of controls of one player to the set of controls of the other player.
%
%An essential feature of our game is that the players cannot observe the strategy used by the other player. In particular, the account holder observes the path of the aggregate process $X$ but not the fraud strategy $\Lambda$. Our setup and terminology thus differs from that of, e.g., \cite{CR} and \cite{PTZ}, where a strategy is a map from the set of controls of one player to the set of controls of the other player.
\end{remark}

In Section~\ref{sec:pure} we determine a Nash equilibrium for the above game in case $M\leq \frac{\sqrt{\pi}}{2\sqrt{r}}$,
see Theorem~\ref{main}. Then, in Section~\ref{sec:mixed}, a Nash equilibrium is obtained also for $M> \frac{\sqrt{\pi}}{2\sqrt{r}}$ by 
allowing for randomized stopping strategies, see Theorem~\ref{msNE}.

\subsection{Literature review}
While the present problem belongs to a new class of stochastic games --- the novel feature being the presence of unknown competition, or a ``ghost'' (cf. the below) --- it does belong in a wider sense to the literature on combined stochastic control and stopping games and to the literature on stochastic games under asymmetric information. Without aspiring to completeness, we give a brief review of such related papers. 

The controller-and-stopper game was introduced as a zero-sum game in discrete time in \cite{MS}. 
In the seminal study \cite{KSu}, the zero-sum controller-and-stopper game is studied in a one-dimensional diffusion setting. 
In \cite{HRSZ} a zero-sum game between a stopper and a singular controller of a one-dimensional diffusion is studied and, depending on which of the controller and the stopper has the first-move advantage, two different solutions are obtained (cf. Remark \ref{precedence:rem} above). A similar game is considered in \cite{HY} in a model based on a spectrally one-sided L\'evy process. 
Further literature on zero-sum controller-and-stopper games include
\cite{
BHOT,
BH,
CCP,
KZ,
NZ},
%
%[[In \cite{HL} a zero sum mixed game, which as a special case covers a controller-and-stopper game, is studied. CHECK]]
%
whereas \cite{CS, KL} study non-zero-sum versions.
%In \cite{CS} a non-zero-sum game between a stopper and an impulse controller is studied in a multi-dimensional diffusion setting. Non-zero-sum controller-and-stopper games are also studied in \cite{KL}. 
% 

The first study of a stochastic differential game under asymmetric information is \cite{CR} (see also \cite{CR2}). In particular, in \cite{CR} a zero-sum stochastic differential game under asymmetric information where two players control a multi-dimensional diffusion is studied. It is shown that the game has a value, which is the solution in a dual sense to an associated 
Hamilton-Jacobi equation. 
%In \cite{G1}, a similar game is studied using a BSDE approach. 
%
In \cite{GR}, a continuous time zero-sum game where one player observes a Brownian motion, and one does not, is studied using an approach which relies on an approximating sequence of corresponding discrete time games.  
Numerical approximation for stochastic differential games with asymmetric information is studied in \cite{BFR} and \cite{G}. Another strand of this literature studies stopping games under asymmetric information, see, e.g., \cite{DEG,DMP,GG,G2,LM}.

A key feature of the problem studied in this paper is the presence of unknown competition in the sense that Player~2 --- 
the fraudster --- may be non-existent (inactive), effectively leaving the unknowing Player~1 --- the account holder --- playing a game against a ``ghost''. 
In contrast to \cite{CR}, where the players can observe the realization of the other player's control, it is essential in our set-up that the actions of the fraudster are not directly observable by the account holder (if they were, the account holder would detect the fraudster as soon as the control is non-zero). It appears that this is the first study of a stochastic game with unknown competition and a continuously controllable state process --- the only other paper to consider dynamic stochastic games under unknown competition is, according to our knowledge, \cite{DAE}, in which a Dynkin (stopping) game is studied and where the term ``ghost'' is introduced. 
In particular, in \cite{DAE} the effect of unknown competition in a Dynkin game is studied in a setting where each of the two players is uncertain whether the other player exists or not; using methods from filtering theory it is shown that a key feature of the equilibrium solution to that problem is that randomized stopping strategies should be used in such a way that the other player's adjusted belief process, i.e., the conditional probability of active competition, stays below a certain boundary. For related studies within the economics literature see \cite{HKL} and \cite{MM}, where auctions with unknown competition in a non-dynamic setting are studied. 

Admittingly, the current set-up models a rather stylized version of fraud detection, and naturally has its limitations from an applied perspective. For example, the set-up of a fraudster that is either active or inactive 
may seem unrealistically static, and one could allow for a more dynamic presence of, 
possibly, several fraudsters.
Also, the account holdings are assumed unlimited so that the analysis below becomes
independent of the present value of $X$, whereas limited resources would be a natural 
ingredient in many applications. Moreover, the set of possible actions (stop or not stop) of the account holder is rather scarce; applications in resource extraction would call for a continuous action space also for her.
It is our hope that the methods developed in the present paper will facilitate and encourage further research into dynamic stochastic games under unknown competition.

%
%%
%%
%
%\\
%
%\textbf{[Korrläs referenslistan och att alla referenser nämns i texten]}

\section{Applications of filtering theory} \label{sec:filter}

%In this section we make some observations that will be useful when we study what happens when the players defect from a Nash equilibrium.  

The Nash equilibria we identify below are specified in terms of the conditional probability  the account holder assigns to the event $\{\theta=1\}$ that the fraudster is active, and we therefore require some elements of filtering theory. Moreover, in our Nash equilibria the fraud strategies $\Lambda^\ast$ are absolutely continuous in the sense that $\Lambda^\ast_t=\int_0^t\lambda^\ast_s\,ds$ for a positive process $\lambda^\ast$. 
%We remark that the observations in this section are motivational (and slightly imprecise); see Proposition \ref{thm:filter-etc} [[and Proposition xyz [corresponding result for the mixed case]]] for corresponding rigorous results.   

Let us first view the situation from the perspective of the account holder. If the fraudster uses a fixed strategy 
$\Lambda^\ast\in\mathbb L$ on the form $\Lambda^\ast_t=\int_0^t\lambda^\ast_s\,ds$, then, under the assumption that $\lambda^\ast_t$ also satisfies certain integrability conditions, we obtain from filtering theory (see, e.g., \cite[Chapter~8]{LS}) that the 
\textit{innovations process} 
%\komK{Jag har satt superscript $p$ i alla väntevärden, inkl betingade väntevärden. Funkar?}
%\komM{I några fall menar vi väl väntevärde m.a.p. $\P$ i vårt probability space (som kanske borde heta $\P^p$ om det då skall var konsekvent) medan i andra att $P_0=p$ i vårt SDE-system. Det är väl bara i jämvikt som dessa mått (betingade på $\F^X$) sammanfaller (?), så viss försiktighet behöver nog vidtas.}
\[\hat W_t:=X_t + \int_0^t \E[\theta\lambda^\ast_s \vert \mathcal F^X_s] \,ds\]
is a Brownian motion with respect to $(\F^X,\P)$.
Moreover, 
\[\E[\theta\lambda^\ast_t \vert\mathcal F^X_t]=\P(\theta=1\vert \mathcal F^X_t)\E[\lambda^\ast_t \vert\mathcal F^X_t, \theta=1]
=P_t\hat{\lambda}_t,\]
where 
\[\hat\lambda_t:=\E[\lambda^\ast_t \vert\mathcal F^X_t, \theta=1]\]
and the process $P_t:=P^{\Lambda^\ast}_t:=\E[\theta \vert \,  \F^X_t]$ satisfies 
\begin{equation} \label{Peqn}
dP_t= -\hat\lambda_t P_t (1-P_t)\, d\hat W_t
\end{equation}
with $P_0=p$.
Note that if $\lambda^\ast$ is in particular $\F^X$-adapted (which is the case for the equilibria we identify in this paper, as we will see), then $\hat\lambda=\lambda^\ast$ so that
\begin{equation*} 
dP_t= -\lambda^\ast_t P_t (1-P_t) \,d\hat W_t.
\end{equation*}

The process $P=P^{\Lambda^\ast}$ given by \eqref{Peqn} is the conditional probability of the existence of the fraudster 
provided the fraudster uses the strategy $\Lambda^\ast_t=\int_0^t\lambda^\ast_s\,ds$. However, it is essential in our
problem set-up that the actual strategy is not directly observable, which means that we also need to take the effect of
possible deviations from a NE into account. To do that we note that if the fraudster deviates from $\Lambda^*$ and instead uses a strategy $\Lambda$, then the dynamics in \eqref{Peqn} can be expressed as
\begin{eqnarray} \label{P}
dP_t &=& - \hat\lambda_tP_t (1-P_t) (dX_t + \hat\lambda_t P_t \,dt)\\
\notag
&=& - \hat\lambda_t P_t (1-P_t) (-\theta\, d\Lambda_t + \hat\lambda_t P_t\, dt) - \hat\lambda_t P_t (1-P_t ) \,dW_t,
\end{eqnarray}
thus showing exactly how the fraudster may manipulate the belief of the account holder, i.e. the process $P$, by controlling the process $X$. 
Observe that the dynamics of $P$ given by \eqref{P} depends 
on the actual strategy $\Lambda$ used by the fraudster as well on the account holder's assumption of the fraud strategy $\Lambda^\ast_t=\int_0^t\lambda^\ast_s\,ds$. 
In particular, if we condition on 
$\{\theta=1\}$ (the fraudster is active) and suppose that the fraudster uses an arbitrary strategy $\Lambda$, possibly different from $\Lambda^\ast_t=\int_0^t\lambda^\ast_s\,ds$ as assumed by the account holder, then $P=P^\Lambda$ is given by 
\begin{equation} \label{eq:fraudsterP}
dP_t =  -\hat\lambda_t P_t (1-P_t) (-d\Lambda_t + \hat\lambda_t P_t \,dt) - \hat\lambda_t P_t (1-P_t )\, dW_t.
\end{equation}
Furthermore, if the fraudster uses the strategy $\Lambda^\ast$ as assumed by the account holder, and if $\lambda^\ast$ is $\F^X$-adapted so that $\hat\lambda=\lambda^\ast$, then the drift in \eqref{eq:fraudsterP} is $(\lambda^\ast_t)^2 P_t (1-P_t)^2$ which is positive, and in this case the fraudster thus gradually reveals her existence to the account holder. However, note that the drift can also be negative (at most $-(\lambda^\ast_t)^2 P^2_t (1-P_t)$, corresponding to a flat portion of $\Lambda$), or, at the other extreme, arbitrarily large (corresponding to a rapidly increasing $\Lambda$).

%Thus
%Equation \eqref{eq:fraudsterP} describes how the fraudster can manipulate the account holder's beliefs $P$ by choosing $\Lambda$ in a suitable way. 

%
%We will show, however, that it is suboptimal for the fraudster to deviate from a certain $\Lambda^\ast$, which will then be the equilibrium strategy of the fraudster.
%
%We remark also that the dynamics \eqref{eq:fraudsterP} is conditioned on the fact that the fraudster exists, i.e., that $\theta=1$ and hence only available to the fraudster. From the account holder's perspective, a process $\tilde P$ is deduced from the fluctuations of $X$ as
%\[d\tilde P_t=  - \lambda_t\tilde P_t (1-\tilde P_t) (dX_t + \lambda_t \tilde P_t dt),\]
%compare \eqref{P}. If the fraudster uses the equilibrium strategy $\lambda$ (on the event $\{\theta=1\}$), then the process
%$\tilde P$ calculated in this way is indeed a filter and thus satisfies $\tilde P\equiv P$. 

\section{A pure Nash equilibrium} \label{sec:pure}

We provide heuristic calculations in Section~\ref{sec:purestrategies} to obtain a candidate NE. The candidate NE is summarized in Section~\ref{summary}, and further properties are obtained in Section~\ref{properties}. In Section~\ref{sec:verification} we then verify that this candidate indeed constitutes
a NE.

\subsection{Deriving a candidate Nash equilibrium} \label{sec:purestrategies}
In this section we will look for a NE, cf. Definition~\ref{NE}. We emphasize that the calculations in this section are mainly motivational; a formal verification result is provided in Section~\ref{sec:verification} below.

It is reasonable to guess that if the account holder is sufficiently sure that the fraudster is active, she will pay the amount $M$ to deactivate the fraudster, and that, 
from the viewpoint of the fraudster, there should exist an optimal push rate, depending on the account holder's 
current belief, which solves the trade-off between stealing and avoiding detection.
In fact, we will look for a NE where the fraud strategy is of the form
\begin{equation}\label{Markovian-fraud-strat}
\Lambda^\ast_t=\int_0^t\lambda^\ast(P_s)\,ds
\end{equation}
for some non-negative function $\lambda^\ast$ to be determined (from now on $\lambda^\ast$ denotes such a function), 
and the stopping strategy takes the form of a threshold time 
\begin{equation*}
%\label{threshold}
\tau^\ast=\tau^b:=\inf \{t\geq 0 : P_t \geq b\}
\end{equation*}
for some $b\in(0,1)$, where $P$ corresponds, in equilibrium, to the conditional probability for the account holder that the fraudster is active. 

More precisely, given $\Lambda\in\mathbb L$, consider a pair $(X,P)=(X^{\Lambda},P^{\Lambda})$ given by 
\begin{equation} 
\label{syst}
\left\{\begin{array}{ll}
dX_t=-\theta d\Lambda_t+dW_t\\
dP_t=- \lambda^\ast(P_t)P_t (1-P_t) (dX_t + \lambda^\ast(P_t)P_t \,dt) \end{array}\right.
\end{equation}
with $X_0=0$ and $P_0=p$.
Then the dynamics of $P$ conditioned on $\{\theta=1\}$ are 
\begin{align}\label{Sec:3.1:eqP-fraudster}
dP_t =  -\lambda^\ast(P_t)P_t (1-P_t) (-d\Lambda_t+ \lambda^\ast(P_t)P_t\, dt) - \lambda^\ast(P_t)P_t (1-P_t ) \,dW_t. 
\end{align}
%which corresponds to \eqref{eq:fraudsterP}. 
Hence, if the account holder uses a threshold strategy $\tau^b$ for the process $P$, the active fraudster faces a stochastic control problem
%\begin{equation}\label{optimalproblem:fraudster}
\[v(p) =\sup_{\Lambda \in \mathbb L} \hat {\mathcal J}^2(\tau^b,\Lambda;p) = 
\sup_{\Lambda \in \mathbb L}  \E \left [ \left.\int _0^{\tau^b} e^{-rs} d\Lambda_s \right\vert\theta=1\right]
\]
with the underlying process being $P$ given by \eqref{syst} or, equivalently, 
\begin{equation}
\label{optimalproblem:fraudster}
v(p) = 
\sup_{\Lambda \in \mathbb L}  \E \left[\int _0^{\tau^b} e^{-rs} d\Lambda_s \right]
\end{equation}
with $P$ given by \eqref{Sec:3.1:eqP-fraudster} with $P_0=p$.

\begin{remark}
In the system \eqref{syst} we use the function $\lambda^\ast$ (to be determined) to specify the dynamics of $P$, rather than 
$\hat\lambda_t:=\E[\lambda^\ast(P_t)\vert\F^X_t]$ as in Section~\ref{sec:filter}. 
We will see below (cf. Proposition~\ref{thm:filter-etc})
that if the fraudster uses the control $\Lambda_t=\Lambda^\ast_t:=\int^t_0\lambda^\ast(P_s)\,ds$, then 
$\Lambda^\ast$ is $\F^X$-adapted; consequently, in that case, $\hat\lambda_t=\lambda^\ast(P_t)$,
and $P_t$ then coincides with $\E[\theta\vert\F^X_t]$. Hence $P$ corresponds, in equilibrium, to the conditional probability for the account holder that the fraudster is active. 
\end{remark}

Using the dynamic programming principle we expect that if $\lambda^\ast$ corresponds to a NE fraud strategy 
--- i.e. if $\Lambda^\ast_t=\int_0^t\lambda^\ast(P_s)\,ds$ with $P$ given by \eqref{Sec:3.1:eqP-fraudster} is optimal in \eqref{optimalproblem:fraudster} --- then it should hold that
\begin{equation}\label{eq:suboptimality}
\frac{1}{2}(\lambda^\ast)^2p^2(1-p)^2v_{pp}-(\lambda^\ast)^2p^2(1-p) v_p + \lambda^\ast p (1-p) \lambda  v_p -rv +\lambda  \leq 0 
\end{equation} 
for all constants $\lambda\geq 0$ in the continuation region of the NE stopping strategy, i.e. for $0<p<b$.  
(To ease the presentation we have suppressed the dependence on $p$ in the functions in \eqref{eq:suboptimality}, a convention we will use throughout the paper.) 
Similarly, by optimality of $\lambda^\ast$ we expect equality in 
\eqref{eq:suboptimality} when $\lambda=\lambda^\ast$, i.e.,
\begin{equation} \label{eq:optimality}
\frac{1}{2}(\lambda^\ast )^2p^2(1-p)^2 v_{pp}-(\lambda^\ast)^2p^2(1-p) v_p + (\lambda^\ast)^2 p (1-p) v_p  -rv +\lambda^\ast = 0
\end{equation}
for $0<p<b$.
Subtracting  \eqref{eq:suboptimality} from \eqref{eq:optimality} yields
\begin{align*}%\label{lambda-star-cond}
(\lambda^\ast -\lambda) (\lambda^\ast p (1-p) v_p +1) \geq 0
\end{align*}
for all $\lambda\geq 0$, which implies that 
\begin{align*}% \label{lambdaStarLargerThanZero}
\lambda^\ast > 0
\end{align*}
and hence
$$
\lambda^\ast p (1-p) v_p +1  =0
$$
for $0<p<b$. Inserting
\begin{align*}
\lambda^\ast = -\frac{1}{p(1-p)v_p}
\end{align*}
into \eqref{eq:optimality} and multiplying by $v_p$ yields a non-linear ordinary differential equation
\begin{align} \label{eq:fraudsterode}
 \frac{v_{pp}}{2v_p}- \frac{1}{1-p} -rvv_p=0.
\end{align}
This can be integrated, and using separation of variables we find 
\begin{align*}% \label{eq:separated}
e^{-rv^2} \,dv = \frac{A}{(1-p)^2} \,dp.
\end{align*}
The general solution thus satisfies 
\begin{equation*}
\Phi (\sqrt{2r}v(p)) =\frac{B}{1-p} + C ,
\end{equation*}
where 
\[\Phi(y)=\int_{-\infty}^y\varphi(z)\,dz\quad \mbox{and}\quad \varphi(z)=\frac{1}{\sqrt{2\pi}}\exp\{-z^2/2\}\]
are the distribution function and the density of the standard normal distribution, respectively.

Imposing the boundary conditions $v(b)=0$ (which we expect since there is no possibility left for the fraudster to steal after $\tau^b$) and $v(0+):=\lim_{p \searrow 0}v(p) = \infty$ (corresponding to unbounded possibilities for the fraudster to steal in the limit as 
$p\to 0$) yields 
%
%Thus 
%$$
%N_{0,\sigma}  (v(b)) = \frac{C_1}{\sigma \sqrt{2\pi}(1-b)} +C_2 =\N_{0,\sigma}(0) = \frac{1}{2}  \implies C_2= \frac{1}{2}- \frac{C_1}{\sigma \sqrt{2\pi} (1-b)}.
%$$
%Employing also the boundary condition $\lim_{p \to 0}v(p) = \infty$ then gives
%\begin{equation*}
% \N_{0,\sigma}  (v(0)) = \frac{C_1}{\sigma \sqrt{2\pi}} +\frac{1}{2}- \frac{C_1}{\sigma \sqrt{2\pi}(1-b)} = \N_{0,\sigma}(\infty) = 1 \implies C_1= \frac{\sigma \sqrt{2 \pi}(b-1)}{2b}
%\end{equation*}
%so that the final expression for the fraudsters value function $v$ is given by
\begin{equation} \label{valueu}
\Phi(\sqrt{2r}v(p)) =\frac{1}{2b} + \frac{1}{2} - \frac{1}{2b} \frac{1-b}{1-p}.
\end{equation}
The function $v(p)$ implicitly given in \eqref{valueu} is thus a candidate equilibrium value function for the fraudster 
(recall, however, that the threshold $b$ is yet to be determined).

We now turn to the perspective of the account holder. For a given fraud strategy $\Lambda^\ast$ of the form \eqref{Markovian-fraud-strat}, the account holder faces an optimal stopping problem 
\begin{align}\label{optimalproblem:accountholder}
u(p):=\inf_{\tau \in \mathbb T}{\mathcal J}^1(\tau,\Lambda^\ast;p) = 
\inf_{\tau \in \mathbb T}\E\left [ \theta\int _0^{\tau} e^{-rs} \lambda^\ast(P_s)\,ds   + e^{-r\tau}M\I{\tau<\infty} \right ],
\end{align}
where we expect that the underlying process $P=P^{\Lambda^*}$ given by \eqref{syst} (with $\Lambda=\Lambda^*$) coincides with $\E[\theta \vert  \F^X_t]$, and that 
$$
dP_t = -\lambda^\ast(P_t) P_t (1-P_t) \,d\hat W_t,
$$
where $\hat W:= X_t + \int_0^t\lambda^\ast(P_s)P_s\,ds$ is a Brownian motion with respect to $(\F^X,\P)$, see Section \ref{sec:filter}. Arguing heuristically, we may thus use iterated expectations to replace $\theta$ with $P$ in \eqref{optimalproblem:accountholder} and then we expect, based on the dynamic programming principle, that 
\begin{align} \label{eq:accountode}
\frac{(\lambda^\ast)^2 p^2 (1-p)^2}{2}  u_{pp}-r u +p\lambda^\ast  &=0, \quad 0<p<b \\
u(0) &= 0 \notag\\ 
u(b)& =M, \notag
\end{align}
where the boundary conditions come from the account holder having zero expected cost if there is no fraudster and from immediate stopping at the boundary $b$. 

Recall from the discussion above that a NE fraud strategy should satisfy 
\begin{align*} %\label{eq:lambdastar}
\lambda^\ast=  -\frac{1}{p(1-p)v_p}  = \sqrt{2r} \varphi(\sqrt{2r}v) \frac{2b}{1-b} \frac{1-p}{p} 
\end{align*}
(the second equality is derived from \eqref{valueu}),
%where $\varphi$ is the density of the normal random variable with parameters $(0,\sigma^2)$. 
which inserted into \eqref{eq:accountode} gives
\begin{equation} \label{eq:accountode2}
\frac{u_{pp}}{2 v_p}   -r uv_p- \frac{1}{1-p} =0
\end{equation}
for $0<p<b$.
Comparing with \eqref{eq:fraudsterode} we see that $u=v$ is a particular solution to \eqref{eq:accountode2}.
Moreover, making the Ansatz $u(p) = (1-p) f(v(p))$ for the homogenous part (for some function $f$ to be determined) yields
\begin{align*}
u_p&= (1-p) f'(v) v_p  - f(v) \\
u_{pp} &= (1-p) f''(v) v_p^2 + (1-p) f'(v) v_{pp} -2 f'(v) v_p.
\end{align*}
Inserting these expressions into \eqref{eq:accountode2} and using that $\frac{v_{pp}}{2v_p}= rvv_p + \frac{1}{1-p}$ by \eqref{eq:fraudsterode} we find that the homogeneous part of \eqref{eq:accountode2} can be written as
$$
\frac{1}{2}f''(v) + r v f'(v) - rf(v) = 0.
$$
The linear ordinary differential equation 
$$\frac{1}{2}f''(x) + r x f'(x) - r f(x) =0 $$
has general solution 
\begin{equation*}
Ax +B F(\sqrt{2r}x),
\end{equation*}
where
\[F(y):=\varphi(y)-y \Psi(y)\]
and
\[\Psi(y):=1-\Phi(y)=\int_y^\infty\varphi(z)\,dz.\]
A candidate for the equilibrium value function of the account holder is thus
\begin{equation} \label{eq:valueu}
u(p) = A (1-p)  v(p) + B (1-p)  F(\sqrt{2r}v(p)) + v(p),
\end{equation}
where $A$ and $B$ (and $b$ which is hidden implicitly in $v$) are yet to be determined. 

Before turning to these constants, we make some observations concerning the function $F$. 

\begin{lemma}  \label{lemF}
The function $F$ satisfies 
\begin{itemize}
\item[(i)]
$ F(y) \geq 0$;
\item[(ii)]
$\lim_{y\to \infty} F(y) =0$;
\item[(iii)]
$\lim_{y \searrow 0} F(y)=\frac{1}{\sqrt{2\pi}}$;
\item[(iv)]
$\lim_{y \searrow 0} F'(y) = -\frac{1}{2}$.
\end{itemize}
\end{lemma}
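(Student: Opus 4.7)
The plan is to reduce all four assertions to the single identity $F'(y)=-\Psi(y)$, obtained by differentiating $F(y)=\varphi(y)-y\Psi(y)$ using the standard relations $\varphi'(y)=-y\varphi(y)$ and $\Psi'(y)=-\varphi(y)$; the two $y\varphi(y)$ contributions cancel, leaving exactly $-\Psi(y)$. This one computation will drive everything that follows.

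Given this identity, items (iii) and (iv) drop out by continuous evaluation at $y=0$: $F(0)=\varphi(0)=1/\sqrt{2\pi}$ gives (iii), and $\lim_{y\searrow 0}F'(y)=-\Psi(0)=-1/2$ gives (iv).

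For item (ii), I would invoke the standard Mills ratio bound obtained by integration by parts, namely $\Psi(y)=\varphi(y)/y-\int_y^\infty \varphi(z)/z^2\,dz$ for $y>0$; multiplying by $y$ gives $y\Psi(y)=\varphi(y)-y\int_y^\infty \varphi(z)/z^2\,dz$, where the remainder is dominated by $\varphi(y)/y\to 0$. Consequently $F(y)=\varphi(y)-y\Psi(y)\to 0$ as $y\to\infty$.

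Item (i) is then a two-line consequence of the preceding: since $F'(y)=-\Psi(y)\leq 0$ on all of $\mathbb R$, $F$ is nonincreasing, and combined with $\lim_{y\to\infty}F(y)=0$ from (ii) this forces $F(y)\geq 0$ for every $y$. There is no real obstacle in the argument; the only nontrivial input is the classical Mills ratio asymptotic in (ii), which could equivalently be extracted from the integral representation $F(y)=\int_y^\infty (z-y)\varphi(z)\,dz$ (derived from $\varphi(y)=\int_y^\infty z\varphi(z)\,dz$) followed by dominated convergence; this alternative representation also gives (i) directly for $y\geq 0$, bypassing the monotonicity argument entirely.
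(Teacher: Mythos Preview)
Your proof is correct. The main difference from the paper is the logical ordering of (i) and (ii): the paper first proves (i) directly via the one-line inequality $y\Psi(y)=y\int_y^\infty\varphi(z)\,dz\leq\int_y^\infty z\varphi(z)\,dz=\varphi(y)$, and then obtains (ii) by squeezing $0\leq F(y)\leq\varphi(y)\to 0$. You instead establish (ii) first through the Mills ratio integration-by-parts expansion and then deduce (i) from the monotonicity $F'=-\Psi\leq 0$ together with (ii). Both routes rest on the same two ingredients (the derivative identity and the Mills-type bound), and your alternative integral representation $F(y)=\int_y^\infty(z-y)\varphi(z)\,dz$ is precisely a rephrasing of the paper's inequality for (i). Items (iii) and (iv) are handled identically in both proofs.
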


\begin{proof}
(i) is immediate from the standard estimate
\[y\int_y^\infty\varphi(z)\, dz\leq \int_y^\infty z \varphi(z)\, dz=\varphi(y).\]
(ii) then follows since (i) implies $0\leq F(y)\leq \varphi(y)\to 0$ as $y\to\infty$.
(iii) is obvious.
Finally, (iv) follows from $F'(y)=-\Psi(y)$.
\end{proof}

We now impose boundary conditions for the candidate equilibrium value function $u(p)$. More precisely, we wish to determine the constants $A, B$, and $b$ so that, first of all, $u(0+)=0$ and $u(b)=M$.
Recalling $v(0+)=\infty$ and $v(b)=0$, and using (ii) and (iii) from Lemma~\ref{lemF}, we get from \eqref{eq:valueu} that 
$$
 A=-1\quad \quad \mbox{and} \quad \quad B= \frac{M \sqrt{2\pi}}{1-b},
$$
so 
\[u(p) = p  v(p) + M \sqrt{2\pi}\frac{1-p}{1-b}   F(\sqrt{2r}v(p)) .\]
To determine the unknown stopping boundary $b$, we
impose the principle of smooth fit from optimal stopping theory, which for us takes the form $u_p(b)=0$. We have that
\[
u_p(p)= v(p) +p v_p(p) -\frac{M \sqrt{2\pi}}{1-b} \left (F(\sqrt{2r}v(p)) + (1-p)\Psi(\sqrt{2r}v(p)) \sqrt{2r}v_p(p)   \right) \]
so
\begin{eqnarray}\label{exp}
u_p(b) &=&  b v_p(b)  - \frac{M \sqrt{2\pi}}{1-b} \left(F(\sqrt{2r}v(b)) + (1-b)\Psi(\sqrt{2r}v(b)) \sqrt{2r}v_p(b)      \right)  \\
\notag
&=&  b v_p(b) - \frac{M}{1-b} - M  \sqrt{r\pi}  v_p(b)  .
\end{eqnarray}
Differentiation of \eqref{valueu} gives 
\begin{equation}\label{vp}
v_p(b) = -\frac{ \sqrt{\pi}}{2\sqrt{r}b(1-b)},
\end{equation}
and plugging this and $u_p(b)=0$ into \eqref{exp} yields
\begin{equation*}
 b= \frac{M \pi\sqrt{r}}{\sqrt{\pi} +2M\sqrt{r}}.
\end{equation*}

\subsection{The candidate Nash equilibrium}\label{summary}

We now summarize the specification of our candidate Nash equilibrium $(\tau^b,\Lambda^\ast)$. 
Let
\begin{equation}
\label{b}
b:= \frac{M \pi\sqrt{r}}{\sqrt{\pi} +2M\sqrt{r}},
\end{equation}
and assume that 
\begin{equation}\label{hatM}
M\leq \hat M:=\frac{\sqrt\pi}{2\sqrt r}
\end{equation} 
(this bound has not been discussed yet, but we include it here as it is essential in the verification below, cf. Lemma~\ref{ubound} below).
The expression \eqref{b} for $b$ is increasing in $M$, and thus
\begin{align*}%\label{b-less-pi-over4}
b= \frac{M \pi\sqrt{r}}{\sqrt{\pi} +2M\sqrt{r}}\leq  \frac{\hat M \pi\sqrt{r}}{\sqrt{\pi} +2\hat M\sqrt{r}}=\pi/4<1.
\end{align*}
Define $v$ (the candidate equilibrium value function for the fraudster in the interim version of the game, cf. Remark \ref{rem}) by
\begin{equation}
\label{eq:value1}
 v(p) =\begin{cases} \frac{1}{\sqrt{2r}}\Phi^{-1}\left(1- \frac{(1-b)p}{b(1-p)} \frac{1}{2}\right) & 0<p<b \\ 0 & p\geq b \end{cases}
\end{equation}
and $u$ (the candidate equilibrium value function for the account holder) by 
\begin{equation}  
u(p) = \begin{cases} 
p  v(p) + M \sqrt{2\pi}\frac{1-p}{1-b}   F(\sqrt{2r}v(p)) & 0<p<b \\ M & p\geq b.
\end{cases}
\label{eq:value2}
\end{equation}
Next, let
\begin{equation}
\lambda^\ast(p)= \left\{\begin{array}{ll}
\frac{-1}{p(1-p)v_p(p)} & 0<p< b \\
\frac{2b\sqrt{r}}{p\sqrt{\pi}}  & p\geq b
 \end{array}\right.
 \label{eq:strategy2}
\end{equation}
(using \eqref{vp} one sees that the second line of \eqref{eq:strategy2} corresponds to a continuous extension of $\lambda^\ast$). Furthermore, let $(X,P)=(X^{\Lambda},P^{\Lambda})$ for $\Lambda\in \mathbb L$ be defined by
\begin{equation}\label{eq:P-equation}
\left\{\begin{array}{ll}
dX_t=-\theta \, d\Lambda_t + dW_t\\
dP_t=- \lambda^\ast(P_t)P_t (1-P_t) (dX_t + \lambda^\ast(P_t) P_t \,dt) \end{array}\right.
\end{equation}
with $X_0=0$ and $P_0=p$.
Our candidate NE is given by $(\tau^b,\Lambda^\ast)$ with 
\begin{equation} \label{eq:strategy1}
\tau^b = \inf	\{t\geq 0: P_t \geq b\}
\end{equation}
and
\begin{equation}\label{eq:strategy2K}
\Lambda^\ast_t= \int_0^t\lambda^\ast(P_s)\,ds.
\end{equation}

For graphical illustrations of $\lambda^\ast$, $u$ and $v$, see Figures~\ref{fig:lambda}-\ref{fig:uv}.

\begin{figure}[h]
  \subfigure[]{\includegraphics[width=0.45\textwidth]{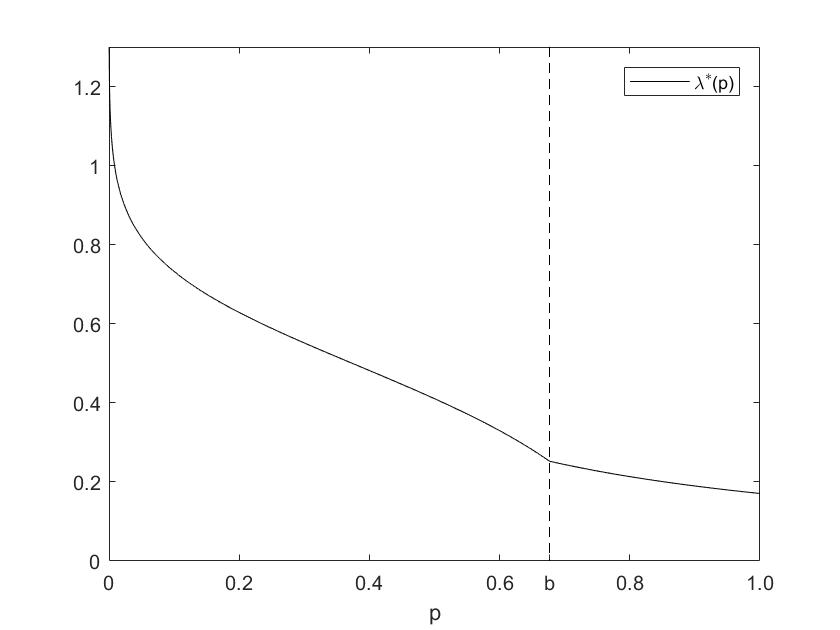}\label{fig:lambda}}
  \subfigure[]{\includegraphics[width=0.45\textwidth]{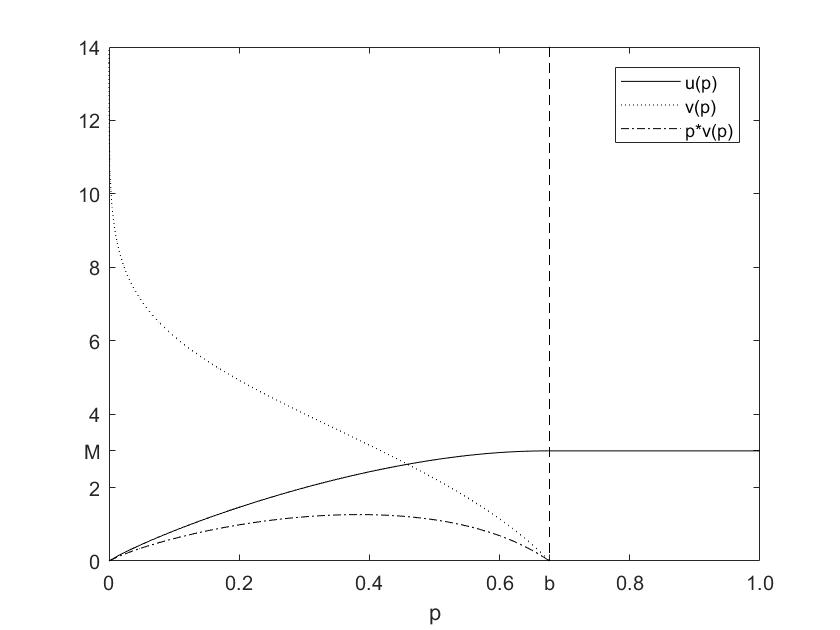}\label{fig:uv}}
\caption{
The intensity of the equilibrium fraudster strategy $\lambda^*$,   
the threshold for the account holder equilibrium stopping strategy $b$, and   
the equilibrium value functions $u$ (account holder) and $v$ (fraudster, interim version of the game); cf. Section \ref{summary}. We also include the fraudster equilibrium value function $pv$ in the ex ante game, cf. Remark~\ref{rem}. The parameters
are $r=0.05$ and $M=3$ (note that $M<\hat M\approx 3.96$, where $\hat M$ is defined in \eqref{hatM}).}
\end{figure}

\begin{remark} \label{existence-strong-sol-remark}
Suppose a fraud strategy $\Lambda \in \mathbb L$ is given. Then $X$ is given by $X_t=W_t$ on $\{\theta=0\}$ and by $X_t=W_t+\Lambda_t$ on $\{\theta=1\}$. Thus the second equation of \eqref{eq:P-equation} is a stochastic differential equation driven by a
continuous semimartingale $X$ with drift and diffusion coefficients 
\begin{equation} \label{drift-coeff1}
- (\lambda^\ast)^2p^2(1-p) =
\left\{
\begin{array}{ll}
\frac{-1}{(1-p)v_p^2} & 0<p<b\\
-\frac{4r}{\pi}b^2(1-p)  & p \geq b 
\end{array}
\right.
\end{equation}
and
\begin{equation} \label{diffusion-coeff}
-\lambda^\ast p(1-p) =
\left\{
\begin{array}{ll}
\frac{1}{v_p} & 0<p<b\\
-\frac{2\sqrt{r}b}{\sqrt{\pi}}(1-p) & p \geq b,
\end{array}\right.
\end{equation}
respectively.
These coefficients are Lipschitz on any interval $[\epsilon,1]$, $\epsilon>0$, so a strong solution $P$ exists, at least up to a potential explosion at $0$. 

Also note that the fraudster observes $\theta$ and chooses $\Lambda$, and can then solve the system 
\eqref{eq:P-equation}. The account holder, on the other hand, only observes a path of $X$, and calculates $P$ using the second equation of 
\eqref{eq:P-equation} in a pathwise manner. By the results of \cite{Ka}, they obtain the same $P$.
\end{remark}

To show that the process $P$ cannot reach zero in finite time we will use the following lemma.

\begin{lemma} \label{lem:fellertest} 
There exist constants $A>0$ and $B>0$ such that, for all $x \geq B$, it holds that
\begin{align*}
\varphi(x) \leq A \sqrt{\ln\left( \frac{1}{\Psi(x)}\right)} \Psi(x).
\end{align*}
\end{lemma}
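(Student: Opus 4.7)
The plan is to exploit classical two-sided Mills ratio bounds, which together control both $\varphi(x)/\Psi(x)$ and $\ln(1/\Psi(x))$ by polynomial/logarithmic functions of $x$, and then simply combine them.

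First I would recall the standard Gaussian tail estimates: for every $x>0$,
\begin{equation*}
\frac{x}{1+x^{2}}\varphi(x)\;\leq\;\Psi(x)\;\leq\;\frac{\varphi(x)}{x},
\end{equation*}
which are elementary consequences of integration by parts (the upper bound from $\int_{x}^{\infty}\varphi(z)dz\leq \int_{x}^{\infty}(z/x)\varphi(z)dz$, and the lower bound from $\int_{x}^{\infty}(1-3z^{-4})\varphi(z)dz$ or a similar manipulation). Rearranging the lower bound gives $\varphi(x)/\Psi(x)\leq (1+x^{2})/x\leq 2x$ whenever $x\geq 1$.

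Next I would derive a lower bound on $\ln(1/\Psi(x))$. The upper Mills estimate yields $\Psi(x)\leq \varphi(x)/x=(x\sqrt{2\pi})^{-1}e^{-x^{2}/2}$, and therefore
\begin{equation*}
\ln\!\left(\frac{1}{\Psi(x)}\right)\;\geq\;\frac{x^{2}}{2}+\ln(x\sqrt{2\pi})\;\geq\;\frac{x^{2}}{2}
\end{equation*}
for all $x\geq 1$ (since the second term is nonnegative there). Taking square roots gives $\sqrt{\ln(1/\Psi(x))}\geq x/\sqrt{2}$, i.e.\ $x\leq \sqrt{2}\,\sqrt{\ln(1/\Psi(x))}$.

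Combining the two bounds, for $x\geq 1$,
\begin{equation*}
\varphi(x)\;\leq\;2x\,\Psi(x)\;\leq\;2\sqrt{2}\,\sqrt{\ln(1/\Psi(x))}\,\Psi(x),
\end{equation*}
so the lemma holds with $A=2\sqrt{2}$ and $B=1$. There is no real obstacle here; the only thing to be careful about is to choose $B$ large enough that both Mills-type bounds are in their comfortable regime (any $B\geq 1$ suffices, and the exact constant $A$ is unimportant for the verification argument that uses the lemma, which presumably is a Feller-test computation to rule out $P_{t}\downarrow 0$).
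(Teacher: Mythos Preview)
Your proof is correct and follows essentially the same approach as the paper: both bound $\varphi(x)/\Psi(x)$ by a constant times $x$ and bound $\sqrt{\ln(1/\Psi(x))}$ from below by a constant times $x$ via the upper Mills inequality $\Psi(x)\leq \varphi(x)/x$. The only cosmetic difference is that the paper invokes the asymptotic $\varphi(x)/(x\Psi(x))\to 1$ to obtain $\varphi(x)\leq Cx\Psi(x)$ for large $x$, whereas you use the explicit lower Mills bound to get $\varphi(x)\leq 2x\Psi(x)$ for $x\geq 1$, which yields concrete constants $A=2\sqrt{2}$, $B=1$.
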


\begin{proof}
Recall that $x\Psi(x) \leq \varphi(x)$ so that $\frac{1}{\Psi(x)}\geq \frac{x}{\varphi(x)}$, and that $\lim_{x\rightarrow\infty}\frac{\varphi(x)}{x\Psi(x)}=1$.  Thus there exist constants $B,C,D>0$ such that for all $x\geq B$ it holds that
\begin{eqnarray*}
C \frac{x\Psi(x)}{\varphi(x)}\sqrt{\ln\frac{1}{\Psi(x)}} 
& \geq& \sqrt{\ln\frac{1}{\Psi(x)}}
 \geq \sqrt{\ln x-\ln \varphi(x)}\\
& \geq& \sqrt{\ln x+\frac{x^2}{2}} \geq D x.
\end{eqnarray*}
The result thus follows by setting $A=C/D$.
\end{proof}

\begin{proposition} \label{thm:existence-etc} 
Given any fraud strategy $\Lambda \in \mathbb L$, the SDE \eqref{eq:P-equation} has a strong solution $(X,P)=(X^{\Lambda},P^{\Lambda})$, with
\begin{align} 
\tau_0:=\inf\{t\geq 0: P_t=0\}=\infty \enskip a.s.\label{stopP-inf}
\end{align}
Moreover, the fraud strategy $\Lambda^\ast$ in \eqref{eq:strategy2K} is admissible in the sense that it satisfies 
$\Lambda^\ast \in \mathbb L$.   
\end{proposition}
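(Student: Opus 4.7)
My plan is to establish in order: the existence of a strong solution $P$ to \eqref{eq:P-equation} up to the first hitting time $\tau_0$ of $0$; the almost-sure identity $\tau_0=\infty$; and the admissibility $\Lambda^\ast\in\mathbb L$. Existence is immediate from Remark~\ref{existence-strong-sol-remark}: the coefficients \eqref{drift-coeff1}--\eqref{diffusion-coeff} are Lipschitz on $[\epsilon,1]$ for every $\epsilon>0$, yielding a strong solution $P$ on $[0,\tau_0)$. Since the coefficients also vanish at $p=1$, one can absorb $P$ at $1$ if it is ever reached, leaving $\tau_0$ as the sole boundary behaviour to analyze.

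For the key claim $\tau_0=\infty$, my plan is to apply Feller's test for non-attainability of the left endpoint. In the purely-diffusive case (i.e., on $\{\theta=0\}$, so $X=W$), a direct computation of the ratio $2\,\text{(drift)}/\sigma^2=-2/(1-p)$ gives the scale derivative $S'(p)\propto(1-p)^{-2}$, hence $S(0+)>-\infty$ and $S(y)-S(0+)\propto y/(1-y)$; Feller's criterion for non-attainability then reduces to showing divergence of $\int_0^{p_0}y(1-y)v_p(y)^2\,dy$. Changing variables via $z=\sqrt{2r}\,v(y)$ and using the explicit formula $\Psi(z)=(1-b)y/[2b(1-y)]$ extracted from \eqref{valueu}, this integral is a positive multiple of $\int_{z(p_0)}^\infty \Psi(z)/\varphi(z)\,dz$. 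Lemma~\ref{lem:fellertest} yields $\Psi(z)/\varphi(z)\geq 1/(A\sqrt{\ln(1/\Psi(z))})$, and combined with the Mills-ratio bound $\Psi(z)\leq \varphi(z)/z$ (implying $\ln(1/\Psi(z))\leq C z^2$ for large $z$) this gives $\Psi(z)/\varphi(z)\geq C'/z$, so the integral diverges and $0$ is unattainable for the pure diffusion.

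For general $\theta=1$ with arbitrary $\Lambda\in\mathbb L$, the SDE for $P$ carries the extra non-negative term $\lambda^\ast(P)P(1-P)\,d\Lambda$, which intuitively only pushes $P$ away from $0$. I would make this precise via a Lyapunov argument using $v$: combining \eqref{eq:fraudsterode} with the identity $\lambda^\ast p(1-p)v_p=-1$, one computes $d(e^{-rt}v(P_t))=-\theta e^{-rt}\,d\Lambda_t+e^{-rt}\,dW_t$ on the event $\{P_t\in(0,b)\}$, so $e^{-rt}v(P_t)$ stopped at $\tau^b\wedge\tau_\epsilon$, with $\tau_\epsilon=\inf\{t:P_t\leq\epsilon\}$, is a non-negative supermartingale. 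Optional sampling gives $v(\epsilon)\,\E[e^{-r\tau_\epsilon}\I{\tau_\epsilon<\tau^b}]\leq v(p)$, and $v(0+)=\infty$ forces $\P(\tau_0<\tau^b)=0$. Iterating this over the (at most countable, by continuity of $P$) excursions of $P$ below $b$ via the strong Markov property then yields $\P(\tau_0<\infty)=0$.

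Admissibility of $\Lambda^\ast$ is then nearly immediate: substituting $d\Lambda^\ast_t=\lambda^\ast(P_t)\,dt$ into \eqref{eq:P-equation} produces an SDE for $P$ driven by $W$ alone with $\theta$-dependent Lipschitz coefficients on $[\epsilon,1]$, so a strong $\F^{W,\theta}$-adapted solution exists globally by the steps above, making $\Lambda^\ast_t=\int_0^t \lambda^\ast(P_s)\,ds$ continuous, non-decreasing (since $\lambda^\ast\geq 0$), vanishing at $t=0$, and $\F^{W,\theta}$-adapted. The step I expect to be the main obstacle is the non-attainability claim: specifically, establishing the divergence of the Feller integral via Lemma~\ref{lem:fellertest} and then transferring the conclusion to the $\theta=1$ SDE where the non-autonomous $d\Lambda$ perturbation breaks the pure-diffusion structure, which is precisely where the Lyapunov/excursion machinery is needed.
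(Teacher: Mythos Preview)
Your proposal is largely correct and the Feller-test part for $\{\theta=0\}$ follows the paper's route closely (the paper also reduces to a divergent integral involving $v_p^2$ and invokes Lemma~\ref{lem:fellertest}); the admissibility of $\Lambda^\ast$ is likewise handled the same way. The genuine difference is your treatment of $\{\theta=1\}$: the paper simply invokes a comparison principle (the $d\Lambda$-term only increases the drift of $P$, so non-attainability on $\{\theta=0\}$ implies it on $\{\theta=1\}$), whereas you give a direct Lyapunov argument via the identity $d(e^{-rt}v(P_t))=-\theta e^{-rt}\,d\Lambda_t+e^{-rt}\,dW_t$. Your route is arguably cleaner, since the paper's comparison must accommodate a non-standard $d\Lambda$-driver; moreover your supermartingale identity already covers $\theta=0$ as well, so in fact the Feller computation becomes redundant in your approach.

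Two points need tightening. First, to get $\ln(1/\Psi(z))\leq Cz^2$ you need a \emph{lower} bound on $\Psi$, not the upper Mills bound $\Psi(z)\leq\varphi(z)/z$ you cite; use instead $\Psi(z)\geq z\varphi(z)/(1+z^2)$ (or just observe directly that $\Psi/\varphi\geq z/(1+z^2)$ already gives divergence, bypassing Lemma~\ref{lem:fellertest}). Second, your phrase ``via the strong Markov property'' is not available here: for arbitrary $\Lambda\in\mathbb L$ the process $P$ is not Markov. What does work is a conditional optional-sampling argument over successive crossing times: fix $b'\in(0,b)$, set $\sigma_1=\inf\{t:P_t\geq b\}$, $\rho_1=\inf\{t>\sigma_1:P_t\leq b'\}$, etc.; your It\^o identity holds pathwise on each interval $[\rho_k,\sigma_{k+1}]$, giving $\P(\tau_0\in[\rho_k,\sigma_{k+1})\mid\F_{\rho_k})=0$, and continuity of $P$ forces $\rho_k\to\infty$ since $P_{\sigma_k}=b$ and $P_{\rho_k}=b'$ cannot both converge to a finite limit. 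This repairs the gap without invoking any Markov structure.
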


\begin{proof} 
Suppose a fraud strategy $\Lambda \in \mathbb L$ is given. The existence of a strong solution was established in Remark \ref{existence-strong-sol-remark}. 
To see that \eqref{stopP-inf} holds, note that it suffices to show that \eqref{stopP-inf} holds on $\{\theta=0\}$, since it then
follows from comparison (see \cite[Chapter IX.3]{RY}) that \eqref{stopP-inf} holds also on $\{\theta=1\}$ since $\Lambda$ is non-decreasing (implying that the drift for $P$ is minimal when $\theta=0$). Again by comparison, it suffices to check non-explosion at $0$ for the SDE 
\[
d\tilde P_t = - \frac{C}{v_p^2(\tilde P_t)} - \frac{1}{v_p(\tilde P_t)}\,dW_t
\]
for any constant $C>0$. For such a diffusion, the densities of the scale function and speed measure are
\[s'(a) = e^{2Ca}\]
and 
\[m(p)= 2v_p^2(p)e^{-2Cp},\]
respectively. Using Fubini's Theorem, we thus obtain 
\begin{eqnarray*}
\int_0^z\left(\int_a^zm(p)dp\right)s'(a)\,da 
& =& \int_0^z  \left(\int_0^p e^{2Ca}da\right) 2v_p^2(p)e^{-2Cp}\, dp \\
& \geq& D_1\int_0^z  p v_p^2(p) \, dp
\end{eqnarray*}
(here and below $D_i$, $i=1,2,3$ denote positive constants).
From \eqref{eq:value1} we have
\begin{equation}
\label{psiprop}
\Psi(\sqrt{2r}v(p)) = 1-\Phi(\sqrt{2r}v(p))=\frac{(1-b)p}{2b(1-p)},
\end{equation}
which differentiated yields
\[ pv_p^2(p) = \frac{D_2p}{(1-p)^4\varphi^2(\sqrt{2r}v(p))}\geq \frac{D_2p}{\varphi^2(\sqrt{2r}v(p))}.\]
Using the above and Lemma \ref{lem:fellertest} (assuming that $z$ is small enough) we find that
\begin{eqnarray*}
\int_0^z\left(\int_a^zm(p)dp\right)s'(a)\,da 
& \geq&  D_1D_2\int_0^z \frac{p}{\varphi^2(\sqrt{2r}v(p))}\,dp\\
& \geq&  A^{-2}D_1D_2 \int_0^z \frac{p}{ \ln\left(\frac{1}{\Psi(\sqrt{2r}v(p))}\right) \Psi^2(\sqrt{2r}v(p))}\,dp\\
& =&  A^{-2}D_1D_2\int_0^z \frac{p}{ \left(\frac{1-b}{2b}\frac{p}{1-p}\right)^2\ln\left(\frac{2b}{1-b} \frac{1-p}{p}\right) }\,dp\\
& \geq&  D_3\int_0^z \frac{1}{ -p\ln p}\,dp = \infty.
\end{eqnarray*}
Consequently, \eqref{stopP-inf} follows by Feller's test for explosion.

Now consider  $\Lambda^\ast$ defined in \eqref{eq:strategy2K}. Then the dynamics for $P$ in \eqref{eq:P-equation} can be written as
\begin{align}
dP_t 
%& = - (\lambda^\ast(P_t)P_t)^2(1-P_t)dt - \lambda^\ast(P_t)P_t (1-P_t)dX_t\\
%& = - (\lambda^\ast(P_t)P_t)^2(1-P_t)dt - \lambda^\ast(P_t)P_t (1-P_t)(- \theta \lambda^\ast(P_t)dt + dW_t)\\
%& 
=   (\lambda^\ast(P_t))^2P_t(1-P_t)(\theta-P_t)\,dt  - \lambda^\ast(P_t)P_t (1-P_t) \,dW_t.\label{appPver}
\end{align}
Note that the diffusion coefficient in \eqref{appPver} is also in this case given by \eqref{diffusion-coeff}, and that the drift coefficient is given by \eqref{drift-coeff1} on $\{\theta=0\}$, and by
\begin{equation} \label{drift-coeff2}
(\lambda^\ast(P_t))^2P_t(1-P_t)^2 =
\left\{
\begin{array}{ll}
\frac{1}{pv_p^2(p)} & 0<p<b \\
\frac{4r}{\pi}\frac{b^2}{p}(1-p)^2 & p\geq b.
\end{array}\right.
\end{equation}
on $\{\theta=1\}$. However, the drift coefficient \eqref{drift-coeff2} is also locally Lipschitz and we thus obtain the existence of a strong solution $P$ as in the case above. Using also that $\lambda^\ast$ in \eqref{eq:strategy2} is a positive function (which is easy to verify) we conclude that 
$\Lambda^\ast_t=\int_0^t\lambda^\ast(P_s)\,ds$ is a continuous non-decreasing process adapted to $\F^{W,\theta}$, so $\Lambda^\ast \in \mathbb L$.  
\end{proof}

\subsection{Properties of the candidate Nash equilibrium}
\label{properties}
In this section we derive a few further properties of the candidate Nash equilibrium that are needed in the verification below.

\begin{lemma}\label{ubound}
Assume that $M\leq \hat M$.
Then $u$ defined in \eqref{eq:value2} 
(with $v$   in \eqref{eq:value1}) satisfies $u\leq M$.
\end{lemma}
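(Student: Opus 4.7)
The plan is to establish concavity of $u$ on $(0,b)$. Combined with the boundary value $u(b)=M$ and the smooth-fit condition $u_p(b)=0$, both built into the derivation of $b$ in \eqref{exp}--\eqref{vp}, concavity yields $u(p) \leq u(b) + u_p(b)(p-b) = M$ for $p \in (0,b)$. On $[b,1)$ we have $u \equiv M$ by the definition \eqref{eq:value2}, so this would complete the proof.

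First I would rewrite the ODE \eqref{eq:accountode} as
\[ \tfrac12(\lambda^*(p))^2 p^2 (1-p)^2\, u_{pp}(p) = ru(p) - p\lambda^*(p), \]
which shows that concavity on $(0,b)$ is equivalent to the pointwise inequality $ru(p) \leq p\lambda^*(p)$. To verify this I would substitute the explicit formulas for $u$, $v$ and $\lambda^*$ and pass to the variable $y = \sqrt{2r}v(p) \in (0,\infty)$, which by \eqref{psiprop} satisfies $(1-p)\Psi(y) = (1-b)p/(2b)$. Using $F(y) = \varphi(y)-y\Psi(y)$ and $y\Psi(y) = \varphi(y) - F(y)$, straightforward algebra factors $p\lambda^*(p) - ru(p)$ as a strictly positive multiple of $b\varphi(y) - (M\sqrt{r\pi}-b)F(y)$. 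Using \eqref{b} one further computes $M\sqrt{r\pi}/b - 1 = 2M\sqrt{r}/\sqrt{\pi}$, so the required inequality reduces to
\[ \frac{\varphi(y)}{F(y)} \geq \frac{2M\sqrt{r}}{\sqrt{\pi}}, \qquad y \geq 0. \]

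Under the hypothesis $M \leq \hat M = \sqrt{\pi}/(2\sqrt{r})$, the right-hand side is at most $1$, so it suffices to show $\varphi(y) \geq F(y)$, equivalently $\varphi/F \geq 1$, on $[0,\infty)$. Since $\varphi(0) = F(0) = 1/\sqrt{2\pi}$, this would follow from the monotonicity of $\varphi/F$. A direct calculation using $\varphi'(y)=-y\varphi(y)$ and $F'(y)=-\Psi(y)$ gives
\[ \left(\frac{\varphi}{F}\right)'(y) = \frac{\varphi(y)\,[(1+y^2)\Psi(y)-y\varphi(y)]}{F(y)^2}, \]
so the task reduces to the classical Gaussian tail bound $(1+y^2)\Psi(y) \geq y\varphi(y)$. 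I would verify the latter by noting that $g(y) := (1+y^2)\Psi(y) - y\varphi(y)$ has $g'(y) = -2F(y) \leq 0$, $g(0) = 1/2 > 0$, and $g(y) \to 0$ as $y \to \infty$, so $g \geq 0$ on $[0,\infty)$.

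The only step that feels non-routine is the algebra leading to the clean reduction $\varphi/F \geq 2M\sqrt{r}/\sqrt{\pi}$; once it is in hand, the remaining manipulations of the standard normal density and tail are elementary, and the role of the hypothesis $M\leq\hat M$ becomes transparent: it is exactly the condition making the right-hand side at most $1$, which is the minimum of $\varphi/F$.
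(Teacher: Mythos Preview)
Your argument is correct and follows the same overall strategy as the paper: reduce $u\leq M$ to concavity via the smooth-fit condition, then use the ODE \eqref{eq:accountode} to reduce concavity to the pointwise inequality $ru\leq p\lambda^\ast$ on $(0,b)$. Your algebraic factorisation
\[
p\lambda^\ast(p)-ru(p)=\frac{\sqrt{2r}(1-p)}{1-b}\left[b\varphi(y)-(M\sqrt{r\pi}-b)F(y)\right]
\]
is accurate, and your verification of the Gaussian tail bound $(1+y^2)\Psi(y)\geq y\varphi(y)$ via $g'=-2F$ is clean.

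The only difference from the paper is the final algebraic grouping. Using the identity $y\Psi(y)=\varphi(y)-F(y)$ in the other direction, the paper instead writes
\[
p\lambda^\ast-ru=(2b-M\sqrt{r\pi})\,\sqrt{2r}\,\tfrac{1-p}{1-b}\,\varphi(y)+(M\sqrt{r\pi}-b)\,\tfrac{rpv(p)}{b},
\]
and simply observes that both summands are nonnegative: $2b\geq M\sqrt{r\pi}$ is equivalent to $M\leq\hat M$, and $M\sqrt{r\pi}\geq b$ holds for all $M>0$ by the definition of $b$. This avoids the detour through the monotonicity of $\varphi/F$ and the tail bound altogether, so the role of the hypothesis $M\leq\hat M$ appears even more directly. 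Your route is perfectly valid; the paper's grouping is just a bit shorter.
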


\begin{proof}
To prove the claim we will show that the function $u$ is concave provided
$M\leq \hat M$. Indeed, concavity of $u$ together with the smooth fit condition $u_p(b)=0$ imply $u\leq M$.
For the concavity of $u$ it follows from \eqref{eq:accountode} that it suffices to prove $ru\leq \lambda^\ast p$ for all $p\in(0,b)$.

To do that, note that 
\begin{eqnarray*}
u(p) &=& p  v(p) + M \sqrt{2\pi}\frac{1-p}{1-b}   F(\sqrt{2r}v(p))\\
&=& p v(p) + M \sqrt{2\pi}\frac{1-p}{1-b} \varphi(\sqrt{2r}v(p)) -  M \sqrt{r\pi} v(p)\frac{p}{b},
\end{eqnarray*}
where we used \eqref{psiprop}.
Consequently,
\begin{eqnarray*}
\lambda^\ast p-ru &=& \sqrt{2r} \varphi(\sqrt{2r}v(p)) \frac{2b(1-p)}{1-b}  -r p  v(p) -M r\sqrt{2\pi}\frac{1-p}{1-b} \varphi(\sqrt{2r}v(p))
+rM \sqrt{r\pi} v(p)\frac{p}{b}\\
&=& \left(2b    - M \sqrt{r\pi}   \right)\sqrt{2r} \frac{1-p}{1-b} \varphi(\sqrt{2r}v(p)) + \left( M  \sqrt{r\pi}  -b\right) \frac{r p v(p)}{b}.
\end{eqnarray*}
Since $2b \geq M \sqrt{r\pi}$ follows from $M\leq\hat M$, and since 
\[M  \sqrt{r\pi}  -b=M\sqrt{r\pi}\left(1-  \frac{ \sqrt{\pi}}{\sqrt{\pi} +2M\sqrt{r}}\right)\geq 0,\]
it follows that $\lambda^\ast p\geq ru$, which completes the proof.
\end{proof}

We next establish that if the fraudster uses the candidate NE strategy then the process $P$ corresponds to the account holder's conditional probability of existence of the fraudster (in line with Section \ref{sec:filter}). 

\begin{proposition} \label{thm:filter-etc}
Suppose $\Lambda=\Lambda^\ast$ (see \eqref{eq:strategy2} and \eqref{eq:strategy2K}) in the SDE \eqref{eq:P-equation}, i.e., suppose the fraudster uses the (candidate) NE strategy. Then, for the solution $(X,P)=(X^{\Lambda^*},P^{\Lambda^*})$, it holds that 
\begin{align*}
P_t & =\E\left[\theta \vert \,  \F^X_t\right]
\end{align*}
and 
\begin{align} \label{thm:filter-etc:P}
dP_t & = -\lambda^\ast(P_t) P_t (1-P_t) \, d\hat W_t
\end{align}
a.s. with $P_0=p$, where 
$$\hat W_t:= X_t + \int_0^t\lambda^\ast(P_s)P_s\,ds$$
is a Brownian motion with respect to $(\F^X,\P)$. 
\end{proposition}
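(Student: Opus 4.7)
The strategy is to identify the pathwise solution $P$ of \eqref{eq:P-equation} with the true filter $\pi_t := \P(\theta=1\mid\F^X_t)$ by showing that both satisfy the same $X$-driven SDE with common initial condition, and then to invoke pathwise uniqueness. Once $\pi=P$, the formula \eqref{thm:filter-etc:P} and the Brownian character of $\hat W$ are read off directly from the innovations.

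First I would observe that, by \cite{Ka} (as invoked in Remark~\ref{existence-strong-sol-remark}), the process $P$ solving \eqref{eq:P-equation} is $\F^X$-adapted, so the rate $c_t:=\lambda^\ast(P_t)$ is an $\F^X$-adapted functional of the observation. When $\Lambda=\Lambda^\ast$, the observation dynamics read $dX_t = -\theta c_t\,dt + dW_t$ with $c_t$ $\F^X$-adapted, which places us in the standard nonlinear filtering setting of \cite[Ch.~8]{LS} for the static binary signal $\theta$. Localizing at $\tau_\varepsilon := \inf\{t: P_t\leq\varepsilon\}$ to control the blow-up of $\lambda^\ast$ as $p\downarrow 0$, and then passing to the limit using the non-explosion statement $\tau_\varepsilon\nearrow\infty$ of Proposition~\ref{thm:existence-etc}, the Kushner-Stratonovich equation gives
\[d\pi_t = -\lambda^\ast(P_t)\pi_t(1-\pi_t)\,d\tilde W_t,\qquad \tilde W_t := X_t + \int_0^t \pi_s\lambda^\ast(P_s)\,ds,\]
with $\tilde W$ an $(\F^X,\P)$-Brownian motion and $\pi_0=p$. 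Rearranging yields
\[d\pi_t = -\lambda^\ast(P_t)\pi_t(1-\pi_t)\bigl(dX_t + \lambda^\ast(P_t)\pi_t\,dt\bigr),\]
which is precisely the SDE \eqref{eq:P-equation} satisfied by $P$.

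Both $\pi$ and $P$ therefore solve the same pathwise SDE driven by $X$ with common initial datum $p$. The coefficients \eqref{drift-coeff1}-\eqref{diffusion-coeff} are locally Lipschitz on $(0,1]$, and neither process can reach $0$ in finite time (by Proposition~\ref{thm:existence-etc} for $P$, and by the same Feller-test computation, applied on $\{\theta=0\}$ and $\{\theta=1\}$ separately, for $\pi$). Pathwise uniqueness on $(0,1]$ thus delivers $\pi_t = P_t$ a.s.\ for all $t\ge 0$, from which $P_t = \E[\theta\mid\F^X_t]$ and the identification $\hat W=\tilde W$ are immediate, as is the SDE \eqref{thm:filter-etc:P}.

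The main obstacle is the degeneracy of $\lambda^\ast$ at $p=0$: both the filtering theorem of \cite{LS} and the pathwise uniqueness step require integrability and local Lipschitz control of the coefficients, which fails uniformly on $(0,1]$. Both issues are handled by the same localization at $\tau_\varepsilon$, and it is precisely the non-explosion statement of Proposition~\ref{thm:existence-etc} that justifies letting $\varepsilon\downarrow 0$.
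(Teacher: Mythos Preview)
Your proposal is correct and follows essentially the same approach as the paper: both use the $\F^X$-adaptedness of $P$ to write the Kushner--Stratonovich equation for the true filter with coefficient $\lambda^\ast(P_t)$, observe that the filter and $P$ satisfy the same $X$-driven SDE, and conclude by uniqueness of solutions. The only cosmetic difference is in the localization (the paper truncates via $\lambda^\ast\wedge n$ rather than stopping at $\tau_\varepsilon$), and the paper is somewhat terser about the non-explosion of the filter in the uniqueness step.
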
 

\begin{proof} 
%Consider an arbitrary $n\geq 1$ and let $\tau_n:= \inf\{t\geq 0:P_t\leq 1/n\}$. 
First observe that $P$ is $\F^X$-adapted, so that $\E[\theta\lambda^\ast(P_t)\vert\mathcal F^X_t]=\lambda^\ast(P_t)\Pi_t$, where
$\Pi_t:=\E[\theta\vert\mathcal F^X_t]$. Consequently, by \cite[Chapter 8.1]{LS} we have 
\[d\Pi_t=-\lambda^\ast(P_t)\Pi_t(1-\Pi_t)\,d\bar W_t,\]
where the innovations process
\[\bar W_t:=X_t+ \int_0^t \lambda^\ast(P_s)\Pi_s\,ds\]
is a Brownian motion with respect to $(\F^X,\P)$
(formally, to use \cite[Theorem 8.1]{LS} one needs to localize by, e.g., setting $\lambda^\ast_n:=\lambda^\ast\wedge n$; 
however, since $P$ does not reach 0 in finite time by Proposition~\ref{thm:existence-etc}, there is no problem when letting $n\to\infty$).
Thus 
\[d\Pi_t=-\lambda^\ast(P_t)\Pi_t(1-\Pi_t) (dX_t+ \lambda^\ast(P_t)\Pi_t\,dt),\]
and by \eqref{eq:P-equation},
\[dP_t=- \lambda^\ast(P_t)P_t (1-P_t) (dX_t + \lambda^\ast(P_t) P_t \,dt),\]
so by uniqueness of solutions we find that $P = \Pi$. Therefore, $\bar W=\hat W$, and \eqref{thm:filter-etc:P} holds.
\end{proof}

\subsection{A verification theorem for the pure Nash equilibrium}
\label{sec:verification}

In this subsection we verify that the pair $(\tau^b,\Lambda^\ast)$ defined above indeed constitutes a NE for our game, provided
$M\leq \hat M$ where $\hat M$ is defined in \eqref{hatM}.

\begin{theorem}
\label{main}
{\bf (Verification.)}
Assume that the account holder's cost for stopping satisfies $M\leq  \hat M$. Then the pair $(\tau^b, \Lambda^\ast)$ defined in \eqref{eq:strategy1} and \eqref{eq:strategy2K} is a NE in the sense of Definition~\ref{NE}. 
Moreover, the corresponding equilibrium value functions for the account holder and the fraudster (in the interim version of the game) are given by \eqref{eq:value2} and \eqref{eq:value1}, i.e.,
\[u(p)= \mathcal {\mathcal J}^1(\tau^b, \Lambda^\ast;p) \mbox{ and } v(p) = \hat{\mathcal J}^2(\tau^b, \Lambda^\ast;p).\]
\end{theorem}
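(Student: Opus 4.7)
My plan is to verify the two inequalities in Definition~\ref{NE} separately by applying It\^o's formula to $e^{-rt}v(P_t)$ and $e^{-rt}u(P_t)$. The key structural observation is that the fraudster's HJB operator is linear in $\lambda$, so once the first-order condition $\lambda^\ast(p)p(1-p)v_p(p)=-1$ is in force (as it is on $(0,b)$), the left-hand side of \eqref{eq:suboptimality} becomes \emph{identically} zero as a function of $\lambda\ge 0$. Consequently an It\^o expansion of $e^{-rt}v(P_t)$ collapses the $\lambda$-dependence into a clean boundary term, regardless of the deviation $\Lambda$ chosen by the fraudster.

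For the fraudster's inequality I fix $\tau=\tau^b$ and an arbitrary $\Lambda\in\mathbb L$; by Remark~\ref{rem} it suffices to compare $\hat{\mathcal J}^2(\tau^b,\Lambda;p)$ with $v(p)$. Under $\{\theta=1\}$ the process $P$ obeys \eqref{eq:fraudsterP} with $\hat\lambda=\lambda^\ast(P)$ (the account holder filters assuming the equilibrium strategy). Applying It\^o to $e^{-rt}v(P_t)$ on $[0,\tau^b\wedge T]$, where $v\in C^2(0,b)$ and $P\in(0,b]$, and inserting the ODE \eqref{eq:optimality} together with $\lambda^\ast p(1-p)v_p=-1$, one finds that every $dt$-term cancels and the $d\Lambda_t$-contribution reduces to $-e^{-rt}d\Lambda_t$. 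Localizing to handle the unboundedness of $v$ near $0$ (which is not reached by $P$, by Proposition~\ref{thm:existence-etc}), taking conditional expectations given $\theta=1$, and sending $T\to\infty$ produces the identity
\begin{equation*}
v(p)=\hat{\mathcal J}^2(\tau^b,\Lambda;p)+\E\bigl[e^{-r\tau^b}v(P_{\tau^b})\,\bigm|\,\theta=1\bigr],
\end{equation*}
with $v$ extended by $0$ on $[b,1]$. Since $v\ge 0$ this yields $v(p)\ge\hat{\mathcal J}^2(\tau^b,\Lambda;p)$. Equality for $\Lambda=\Lambda^\ast$ follows by showing $\tau^b<\infty$ almost surely on $\{\theta=1\}$, which I will extract from the strictly positive drift of $P$ in \eqref{appPver} combined with a Feller-type argument at the boundary $b$.

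For the account holder's inequality I fix $\Lambda=\Lambda^\ast$ and an arbitrary $\tau\in\mathbb T$. By Proposition~\ref{thm:filter-etc}, $P=\E[\theta\mid\F^X]$ and $P$ is a local martingale driven by the innovation Brownian motion $\hat W$, so iterated expectations give
\begin{equation*}
\mathcal J^1(\tau,\Lambda^\ast;p)=\E\Bigl[\int_0^\tau e^{-rs}P_s\lambda^\ast(P_s)\,ds+e^{-r\tau}M\I{\tau<\infty}\Bigr].
\end{equation*}
Smooth fit at $b$ gives $u\in C^1(0,1)$, with $u\in C^2$ off $b$, so It\^o--Tanaka applies to $e^{-rt}u(P_t)$. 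The $dt$-coefficient of $d(e^{-rt}u(P_t))$ equals $-e^{-rt}p\lambda^\ast(p)$ on $(0,b)$ by \eqref{eq:accountode}, and $-re^{-rt}M$ on $[b,1]$ since $u\equiv M$ there. After localization, expectation, and comparison with the expression above, the desired inequality $u(p)\le\mathcal J^1(\tau,\Lambda^\ast;p)$ reduces to the two pointwise bounds $u\le M$ (Lemma~\ref{ubound}) and $rM\le p\lambda^\ast(p)$ for $p\ge b$. The second, upon substituting \eqref{eq:strategy2} on $\{p\ge b\}$, is the algebraic inequality $b\ge M\sqrt{r\pi}/2$; using \eqref{b} this is equivalent to the standing hypothesis $M\le\hat M$. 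For $\tau=\tau^b$ the indicators $\I{P_s\ge b}$ vanish throughout $[0,\tau^b)$ and $u(P_{\tau^b})=M$, yielding equality.

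The principal technical obstacle lies in the fraudster step: the combination of $v(0+)=\infty$ and the fact that a deviating $\Lambda$ may push $P$ toward $0$ (where the drift in \eqref{eq:fraudsterP} becomes negative) complicates the localization and the identification of $\lim_{T\to\infty}e^{-rT}v(P_T)$ on the event $\{\tau^b=\infty\}$. Non-explosion at $0$ is granted by Proposition~\ref{thm:existence-etc}, so $v(P_t)$ remains finite; the remaining work is to control the stochastic-integral remainder uniformly in the localization and to justify the passage to the limit via monotone and dominated convergence, so that the boundary-term identity above is rigorously established.
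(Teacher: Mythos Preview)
Your approach is essentially the paper's: It\^o on $e^{-rt}u(P_t)$ under the $(\F^X,\P)$-dynamics for the stopper side, and It\^o on $e^{-rt}v(P_t)$ under the $\{\theta=1\}$-dynamics for the controller side, with the identities $\lambda^\ast p(1-p)v_p=-1$ and \eqref{eq:optimality} collapsing the drift. Two points where the paper is sharper than your outline. First, what you call the ``principal technical obstacle'' evaporates once you note that the $dW$-integrand in $d\bigl(e^{-rt}v(P_t)+\int_0^t e^{-rs}d\Lambda_s\bigr)$ is exactly $e^{-rt}$, so the resulting martingale has quadratic variation bounded by $1/(2r)$ and is lower bounded; optional sampling at $\tau^b$ is therefore immediate and gives the \emph{inequality} $v(p)\ge\hat{\mathcal J}^2(\tau^b,\Lambda;p)$ directly, without ever needing to identify $\lim_{T\to\infty}e^{-rT}v(P_T)$ on $\{\tau^b=\infty\}$. (Your ``identity'' formulation is not what one obtains, and trying to establish it is unnecessary extra work.) Second, for $\tau^b<\infty$ under $\Lambda=\Lambda^\ast$ on $\{\theta=1\}$, the paper does not use a Feller test at $b$ but rather observes that the time-changed process $\tilde P_t:=P_{\int_0^t v_p^2(P_s)\,ds}$ is a 3-dimensional Bessel process, for which finiteness of the hitting time of $b$ is classical; combined with Proposition~\ref{thm:existence-etc} (no explosion at $0$) this gives $\tau^b<\infty$ a.s.
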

\begin{proof}
{\bf Optimality of $\tau^b$.}
We first show that $\tau^b$ is an optimal stopping time for the account holder provided 
the fraudster uses $\Lambda^\ast$, i.e., that ${\mathcal J}^1(\tau, \Lambda^\ast;p) \geq {\mathcal J}^1(\tau^b,\Lambda^\ast;p)$ for any stopping time $\tau\in\mathbb T$.

By construction, $u \in C^{2}((0,b) \cap (b,1)) \cap C^1((0,1))$, and the limits $u_{pp}(b-)$ and $u_{pp}(b+)$ both exist, 
which is sufficient to apply It\^{o}'s formula in its standard form, cf. \cite[Problem 3.6.24]{KS}. 
Moreover, 
\begin{equation}
\label{ineq}
\frac{(\lambda^\ast)^2 p^2(1-p)^2}{2} u_{pp} -r u +\lambda^\ast p\geq 0
\end{equation}
for all $p\in(0,b)\cup(b,1)$. Indeed, for $p<b$, \eqref{ineq} holds with equality by \eqref{eq:accountode2}, and for 
$p>b$ we have $u=M$ so
\begin{equation}
\label{ineqb}
\frac{(\lambda^\ast)^2 p^2(1-p)^2}{2} u_{pp} -r u +\lambda^\ast p = -rM + \frac{2b\sqrt{r}}{\sqrt{\pi}}= \frac{rM(\sqrt\pi-2\sqrt rM)}{\sqrt\pi + 2M\sqrt{r}}\geq 0,
\end{equation}
where we used $b=\frac{M \pi\sqrt{r}}{\sqrt{\pi} +2M\sqrt{r}}$ and 
$M\leq \hat M=\frac{\sqrt{\pi}}{2\sqrt{r}}$.

Hence, using that the dynamics of $P$ satisfies \eqref{thm:filter-etc:P} and It\^{o}'s formula, we obtain for any stopping time $\tau\in\mathbb T$
\begin{eqnarray} \label{eq:expectationstopper}
e^{-r(\tau\wedge n)} u(P_{\tau\wedge n})  
&=& u(p) \notag\\
&& + \int_0^{\tau\wedge n} e^{-rt} 
\left ( \frac{(\lambda^\ast(P_t))^2 P_t^2(1-P_t)^2}{2} u_{pp} (P_t)-r u(P_t) \right)1_{\{P_t\not=b \}} dt \notag\\
&& - \int_0 ^{\tau \wedge n} e^{-rt} \lambda^\ast(P_t) P_t (1-P_t) u_p(P_t)d \hat W_t  \notag \\ 
\notag
&\geq&u(p)  - \int _0^{\tau\wedge n} e^{-rt} \lambda^\ast(P_t) P_t dt \\
&&
+ \int _0^{\tau \wedge n} e^{-rt}   \frac{u_p(P_t)}{v_p(P_t)} 1_{\{P_t<b\}} d \hat W_t.
\end{eqnarray}
The stochastic integral on the right hand side of \eqref{eq:expectationstopper} is in fact a martingale term since the integrand is bounded. Therefore, taking expectation in \eqref{eq:expectationstopper}, and using also Lemma~\ref{ubound}, Proposition~\ref{thm:filter-etc}, Fubini's theorem and iterated expectations, yields
\begin{eqnarray*}
u(p) & \leq& \E \left[e^{-r(\tau\wedge  n)} u(P_{\tau \wedge n}) + \int_0^{\tau\wedge n}e^{-rt}\lambda^\ast(P_t) P_t  \,dt\right] \\
& \leq&  \E \left[e^{-r(\tau \wedge n)} M + \int_0^{\tau \wedge n}e^{-rt}\lambda^\ast(P_t) 
\E [\theta | \,  \F^X_t]  \,dt\right]\\
& =& \E \left[e^{-r(\tau \wedge n)} M + \theta\int_0^{\tau \wedge n}e^{-rt}\lambda^\ast(P_t) \, dt\right].
\end{eqnarray*}
Using monotone convergence we thus find that
\[u(p)\leq  \E \left[e^{-r\tau} M \I{\tau <\infty} + \theta\int_0^{\tau}e^{-rs}\lambda^\ast(P_t)dt\right] = \mathcal {\mathcal J}^1(\tau, \Lambda^\ast; p).\]
Repeating the above argument with $\tau^b$ instead of $\tau$ gives 
\begin{eqnarray*}
u(p) &=& \E \left[e^{-r(\tau^b\wedge  n)} u(P_{\tau^b\wedge n}) + \theta\int_0^{\tau^b\wedge n}e^{-rt}\lambda^\ast(P_t)  \, dt\right]\\
&\geq&  \E \left[e^{-r\tau^b} M\I{\tau^b\leq  n} + \theta\int_0^{\tau^b \wedge n}e^{-rt}\lambda^\ast(P_t) \, dt\right],
\end{eqnarray*}
and monotone convergence thus yields 
\[u(p) \geq  \E \left[e^{-r\tau^b} M\I{\tau^b<\infty} + \theta \int_0^{\tau^b}e^{-rt}\lambda^\ast(P_t) dt\right]= \mathcal J^1(\tau^b, \Lambda^\ast; p). \]
Consequently, 
$$
u(p) =\mathcal  J^1(\tau^b, \Lambda^\ast; p) =\inf_{\tau\in\mathbb T}\mathcal J^1(\tau, \Lambda^\ast; p).
$$

{\bf Optimality of $\Lambda^\ast$.}
We now fix $\tau^b$ as defined in \eqref{eq:strategy1} and show that the strategy defined by \eqref{eq:strategy2K}
is an optimal strategy for the fraudster, 
i.e., that $\hat{\mathcal J}^2(\tau^b, \Lambda^\ast;p) \geq \hat{\mathcal J}^2(\tau^b,\Lambda;p)$ for any fraud strategy $\Lambda \in\mathbb L$. 
Note that the dynamics of $P$ in \eqref{eq:P-equation} can when conditioning on  $\{\theta=1\}$ be written as
\begin{align}\label{eq:P-equation-theta-equals1}
dP_t =  \lambda^\ast(P_t)P_t (1-P_t) (d\Lambda_t - \lambda^\ast(P_t)P_t \,dt) - \lambda^\ast(P_t)P_t (1-P_t )\,dW_t.
\end{align}
Let $\Lambda\in\mathbb L$ be an arbitrary fraud strategy, and consider the process 
$$N_t:=e^{-r(t\wedge\tau^b)}v(P_{t\wedge\tau^b})+ \int^{t\wedge\tau^b}_0e^{-rs}d\Lambda_s.$$
Using It\^{o}'s formula, \eqref{eq:fraudsterode} and \eqref{eq:strategy2}, we find that
\begin{eqnarray*}
dN_t &=& e^{-rt} \left ( \frac{1}{2}(\lambda^\ast(P_t))^2P_t^2 (1-P_t)^2 v_{pp}(P_t)  - rv(P_t) \right )\, dt  \\
&&+  e^{-rt}  \lambda^\ast(P_t) P_t (1-P_t) v_p(P_t)(d\Lambda_t - \lambda^\ast(P_t) P_t\,dt) +e^{-rt}d\Lambda_t\\
&&- e^{-rt}  \lambda^\ast(P_t) P_t (1-P_t) v_p(P_t)  \, dW_t \\
&= & e^{-rt}\,dW_t
\end{eqnarray*}
for $t\leq \tau^b$. Consequently, $N$ is a lower bounded martingale with bounded quadratic variation. 
The optional sampling theorem thus implies that  
\[v(p)=N_0= \E[N_{\tau^b}|\theta=1]\geq \E\left[\left.\int^{\tau^b}_0e^{-rt}d\Lambda_t\right\vert\theta=1\right]=\hat{\mathcal J}^2(\tau^b, \Lambda; p),\]
so $v(p) \geq \sup_{\Lambda \in \mathbb L} \hat{\mathcal J}^2(\tau^b, \Lambda; p)$ since $\Lambda$ was arbitrary. 

Conversely, if $\Lambda=\Lambda^\ast$, then \eqref{eq:P-equation-theta-equals1} reduces to 
\begin{align*} 
dP_t=  (\lambda^\ast(P_t))^2P_t (1-P_t)^2 dt- \lambda^\ast(P_t) P_t (1-P_t ) \,dW_t.
\end{align*}
We now claim that $\tau^b<\infty$ a.s. 
To see this, first note that the process $P$ cannot reach $0$ in finite time by Proposition~\ref{thm:existence-etc}. 
Moreover, the time-changed process 
\[\tilde P_t:=P_{\int_0^t v_p^2(P_s)\,ds}\]
is a 3-dimensional Bessel process, for which it is well-known that $\tilde\tau^b:=\inf\{t\geq 0:\tilde P_t\geq b\}$
is finite a.s.; since $P$ cannot reach 0, this implies that also $\tau^b=\int_0^{\tilde\tau^b} v_p^2(P_s)\,ds<\infty$ a.s.

Redefining $N_t$ with $\Lambda_t=\Lambda^\ast_t$  and recalling that $v(P_{\tau^b})=v(b)=0$ we thus find, with optional sampling, that
\[v(p)  =N_0 = \E[N_{\tau^b}|\theta=1]
 = \E\left[\left.e^{-r\tau^b}v(P_{\tau^b}) + \int^{\tau^b}_0e^{-rt}d\Lambda^\ast_t\right\vert\theta=1\right]
= \hat{\mathcal J}^2(\tau^b, \Lambda^\ast; p).\]
Thus 
\[v(p)=\hat{\mathcal J}^2(\tau^b, \Lambda^\ast; p)=\sup_{\Lambda\in\mathbb L} \hat{\mathcal J}^2(\tau^b, \Lambda; p),\]
which, in view of Remark~\ref{rem}, completes the proof.
\end{proof}

\begin{remark}
A closer inspection of the proof above shows that the specification of $\lambda^\ast$ on $(b,1)$ is somewhat arbitrary. In fact,
any specification with $\lambda^\ast p\geq rM$ on $(b,1)$ (cf. \eqref{ineqb}) would give a NE $(\tau^b,\Lambda^\ast)$.
\end{remark}

\begin{remark}
In equilibrium, the relation 
\[X_t=v(P_t)-v(p)-r\int_0^t v(P_s)\,ds\]
holds (indeed, this can be checked by applying It\^{o}'s formula to the right hand side and then compare with the equilibrium dynamics of $X$). In the other direction, we have been unable to determine the exact form of the functional mapping of a path of $X$ into a value of $P$.
\end{remark}

%\begin{figure} \label{fig:boundforM}
%\caption{Bound for M?}
%\includegraphics{boundforM2.pdf}
%\end{figure}
%
%\begin{figure} \label{fig:boundforM}
%\caption{Bound for M, Zoom}
%\includegraphics{boundforM.pdf}
%\end{figure}
%
%

%
%\begin{figure}[h!!]
%\caption{Value functions (and other)}
%\includegraphics{figurefraud.pdf}
%\end{figure}

\section{A Nash equilibrium with randomized stopping} \label{sec:mixed}
In order to find an equilibrium for the remaining case, in which the cost of stopping satisfies $M> \hat M = \frac{\sqrt{\pi}}{2\sqrt{r}}$, we will in this section expand the class of allowed stopping strategies for the account holder to include \textit{randomized} stopping rules.
Thus, instead of looking for an equilibrium with a threshold time $\tau^b$ as in Section~\ref{sec:purestrategies} we here look for an equilibrium with a randomized stopping time specified by an \textit{intensity} $\beta$ with which the account holder stops; in particular, the equilibrium intensity depends on the conditional probability the account holder assigns to the existence of the fraudster. 

\begin{definition}[Randomized stopping time]
Let $\mathcal A$ be the family of right-continuous non-decreasing processes $\Gamma$ which are adapted to the filtration $\F^X$ and with $\Gamma_{0-}=0$. Let $U$ be a random variable which is $\mbox{Exp(1)}$-distributed and independent of all other random sources. A \textit{randomized stopping time} $\gamma$ is a random variable of the form 
$$
\gamma:=\gamma^{\Gamma}:=\inf \{t\geq 0 : \Gamma_t > U\}
$$ 
where $\Gamma\in \mathcal A$. We say that the random time $\gamma$ is \textit{generated} by the process $\Gamma$, and we denote the class of randomized stopping times by $\mathbb T^r$.
\end{definition}

Given a pair $(\gamma,\Lambda)\in (\mathbb T^r, \mathbb L)$, we define as before the expected cost 
 \begin{equation*} %\label{E-cost2}
\mathcal J^1(\gamma,\Lambda;p) = \E \left [\theta \int _0^{\gamma} e^{-rs} d\Lambda_s   + e^{-r\gamma}M\I{\gamma<\infty} \right ]
\end{equation*}
for the account holder, the {\em ex ante} expected payoff 
\begin{equation*}%\label{E-payoff2}
\mathcal J^2(\gamma,\Lambda;p) =  \E \left [ \theta\int _0^{\gamma} e^{-rs}  d\Lambda_s \right ],
\end{equation*}
for the fraudster, and the expected payoff 
\[\hat{\mathcal J}^2(\gamma,\Lambda;p) =  \E \left [\left. \int _0^{\gamma} e^{-rs}  d\Lambda_s \right\vert\theta=1\right ]\]
for the fraudster in the interim version of the game.

\begin{definition}[Mixed stopping Nash equilibrium]
\label{def:msNE} 
A pair of strategies $(\gamma^\ast, \Lambda^\ast)\in (\mathbb T^r, \mathbb L)$ is a mixed stopping Nash equilibrium (msNE) if
\begin{align*}
&\mathcal J^1(\gamma^\ast, \Lambda^\ast;p) \leq  \mathcal J^1(\gamma, \Lambda^\ast;p) \\
&\mathcal J^2(\gamma^\ast, \Lambda^\ast;p) \geq  \mathcal J^2(\gamma^\ast, \Lambda;p),
\end{align*}
or, equivalently,
\begin{align*}
&\mathcal J^1(\gamma^\ast, \Lambda^\ast;p) \leq  \mathcal J^1(\gamma, \Lambda^\ast;p) \\
& \hat{\mathcal J}^2(\gamma^\ast, \Lambda^\ast;p) \geq  \hat{\mathcal J}^2(\gamma^\ast, \Lambda;p)
\end{align*}
for any $(\gamma,\Lambda)\in(\mathbb T^r, \mathbb L)$. 
\end{definition}

\subsection{Derivation of a mixed stopping Nash equilibrium}
In this section we will look for a msNE in the sense of Definition \ref{def:msNE}. We use heuristic reasoning to obtain a candidate equilibrium which we later verify in Section \ref{sec:verifymsNE}.
As in Section \ref{sec:purestrategies} we consider a process $P$ given by 
\[dP_t=-\lambda^\ast(P_t)P_t(1-P_t)(dX_t+\lambda^\ast(P_t)P_t\,dt)\]
for some non-negative function $\lambda^\ast$ to be determined. We will look for a msNE where the fraud strategy is of the kind \eqref{Markovian-fraud-strat} 
and the stopping strategy is a randomized stopping time generated by a process $\Gamma$ on the form 
$\Gamma_t=\int_0^t\beta(P_s)\,ds$ for some function $\beta$; thus $\beta$ specifies the intensity with which the account holder stops. Moreover, we conjecture that 
$\beta$ has the form
\[\beta(p)=\left\{\begin{array}{ll} 0 & 0<p \leq b\\
\mbox{positive} & b<p<a\\
\infty & p \geq a\end{array}\right.\]
for some $a,b$ with $b<a$, where the infinite intensity should be 
understood as immediate stopping whenever $P_t\geq a$. 
From the fraudster's perspective, martingale arguments as in Section~\ref{sec:purestrategies} again suggest that with
$\Lambda^\ast_t=\int_0^t\lambda^\ast(P_s)\,ds$, the equilibrium value function $v$ should on the interval $(0,a)$ satisfy
\[
\frac{(\lambda^\ast)^2p^2(1-p)^2}{2}v_{pp}-(\lambda^\ast)^2p^2(1-p) v_p  +  \lambda^\ast p (1-p) \lambda v_p -rv-\beta v+\lambda \leq 0\]
for all $\lambda\geq 0$ and 
\begin{equation}
\label{opt}
\frac{(\lambda^\ast)^2p^2(1-p)^2}{2}v_{pp}-(\lambda^\ast)^2p^2(1-p) v_p  +  (\lambda^\ast)^2p (1-p) v_p -rv-\beta v+\lambda^\ast =0,
%\frac{(\lambda^\ast)^2p^2 (1-p)^2}{2} v_{pp} + (\lambda^\ast)^2 p (1-p)^2 v_p- rv -\beta v  + \lambda^\ast =0,
\end{equation}
cf. \eqref{eq:suboptimality} and \eqref{eq:optimality}, respectively.
Consequently,
\begin{equation}
\label{lambda}
\lambda^\ast p (1-p) v_p +1 =0,
\end{equation}
which inserted in \eqref{opt} leads to a non-linear equation
\begin{equation}
\label{nonlin}
\frac{v_{pp}}{2v_p} -rvv_p -\beta vv_p- \frac{1}{1-p}=0
\end{equation}
on $(0,a)$. 
Specializing to the interval $(0,b)$ where $\beta=0$, we have that
\[ \frac{v_{pp}}{2v_p} -rvv_p - \frac{1}{1-p}=0\]
with general solution
\begin{equation*}
\Phi (\sqrt{2r}v(p)) =\frac{C}{1-p} + D.
\end{equation*}
Imposing the boundary condition $v(0+)=\infty$ yields $C+D=1$ so that, for $0<p<b$, 
\[\Phi(\sqrt{2r}v(p))=\frac{Cp}{1-p}+1 ,\]
or, equivalently,
\begin{equation}
\label{Candb}
\Psi(\sqrt{2r}v(p))=\frac{-Cp}{1-p}.
\end{equation}
However, since the account holder does not stop with certainty at $p=b$, we do \textbf{not} impose $v(b)=0$, so the constant
$C$ remains to be specified.

Next, we expect, as in Section \ref{sec:purestrategies}, that the value function $u$ of the account holder satisfies
\[\frac{(\lambda^\ast)^2p^2(1-p)^2}{2} u_{pp} - r u +p\lambda^\ast  =0 \]
in the interval $(0,b)$ where stopping does not happen. Solving the equation above using \eqref{lambda} and the boundary conditions $u(0)=0$ and $u(b)=M$ gives  
\begin{eqnarray}\label{ueq}
\notag
u(p) &=& A(1-p)  v(p) +B (1-p)  F(\sqrt{2r}v(p)) + v(p)\\
&=& pv(p)+(M-bv(b))\frac{ (1-p)  F(\sqrt{2r}v(p))}{ (1-b)F(\sqrt{2r} v(b))}
\end{eqnarray}
for $0<p\leq b$.

By the indifference principle in game theory, we also expect that
the expected cost for the account holder is constantly equal to $M$ for $b<p<a$ where the intensity of stopping is positive. 
Imposing $u=M$ on the interval $(b,a)$, and using martingale arguments as above, we find that 
$$
0=\frac{(\lambda^\ast)^2p^2(1-p)^2}{2} u_{pp} - r u +p\lambda^\ast  = -rM + p\lambda^\ast  
$$
so that 
\[\lambda^\ast = \frac{rM}{p} \]
for $ b\leq p\leq  a$.
By \eqref{lambda} we thus have
\begin{equation}
\label{der}
v_p = -\frac{1}{rM (1-p)},
\end{equation}
and integration yields 
\[v= \frac{1}{rM} \ln(1-p) + E.\]
Imposing the boundary condition $v(a)=0$ (with the motivation that there is no possibility for the fraudster to act after 
reaching $a$) gives
\begin{equation} 
\label{vinba}
v(p)= \frac{1}{rM} \ln\frac{1-p}{1-a}
\end{equation}
for  $ b\leq p\leq  a$.

Note that in \eqref{ueq}, the free boundary $b$ and the value $v(b)$ (equivalently, $b$ and the constant $C$ in \eqref{Candb}) are still unknown. Imposing the smooth fit condition $u_p(b)=0$ gives 
\[v(b) + bv_p(b) =(M-bv(b))\left(\frac{1}{1-b} + \frac{\sqrt{2r}v_p(b)\Psi(\sqrt{2r}v(b)))}{ F(\sqrt{2r} v(b))} \right),\]
which simplifies to
\begin{equation} \label{lofven}
\frac{M-v(b)}{1-b} + \frac{M\sqrt{2r}\Psi(\sqrt{2r}v(b))-b\varphi(\sqrt{2r}v(b))}{F(\sqrt{2r}v(b))}v_p(b)=0.
\end{equation}
Moreover, imposing $v_p(b-)=v_p(b+)$ we obtain from \eqref{Candb} and \eqref{der} that
\begin{equation} 
%\label{eq:constantseqsys}
%1 +  \frac{C_1 \check b }{\sigma \sqrt{2\pi} (1- \check b)} &=& \N_{0,\sigma} ( \frac{1}{rM}  \ln\left( \frac{1-\check b }{1-\hat b}  \right ) ) \\
v_p(b)=\frac{C}{ \sqrt{2 r}(1-b)^2\varphi{(\sqrt{2 r}v(b))}} =  -\frac{1}{rM(1-b)} . \label{eq:constantseqsys3}
\end{equation}
Using \eqref{Candb} in \eqref{eq:constantseqsys3} we find that 
\begin{equation}
\label{br}
b=\frac{\sqrt rM\Psi(\sqrt{2r}v(b))}{\sqrt{2}\varphi(\sqrt{2r}v(b))},
\end{equation}
so \eqref{lofven} simplifies to 
\begin{equation}
\label{palme}
\frac{M-v(b)}{1-b} + \frac{ M\sqrt{r/2}\Psi(\sqrt{2r}v(b))}{F(\sqrt{2r}v(b))}v_p(b)=0.
\end{equation}
Moreover, inserting the second expression for $v_p(b)$ in \eqref{eq:constantseqsys3} into \eqref{palme} yields 
\[\sqrt{2r}(M-v(b))= \frac{\Psi(\sqrt{2r}v(b))}{F(\sqrt{2r}v(b))},\]
i.e.,
\begin{equation}\label{eqnforv}
\sqrt{2r}(M-v(b))F(\sqrt{2r}v(b)) -\Psi(\sqrt{2r}v(b))=0.
\end{equation}
Note that \eqref{eqnforv} is an equation in the single unknown variable $v(b)$.

\begin{lemma}\label{lem}
Assume that $M>\hat M=\frac{\sqrt\pi}{2\sqrt{r}}$. Then equation \eqref{eqnforv} has 
a unique positive solution $v(b)$. Moreover, $v(b)\in(0,M/2)$.
\end{lemma}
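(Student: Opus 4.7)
The plan is to reduce \eqref{eqnforv} to a single-variable problem by substituting $x := \sqrt{2r}v(b)$ and studying
\begin{equation*}
g(x) := (\sqrt{2r}M - x)F(x) - \Psi(x);
\end{equation*}
then $v(b)$ solves \eqref{eqnforv} iff $g(x)=0$, positivity of $v(b)$ amounts to positivity of $x$, and $v(b) \in (0,M/2)$ translates to $x \in (0,x^*)$ where $x^* := \sqrt{2r}M/2$. Using $F'(y)=-\Psi(y)$ (as in the proof of Lemma~\ref{lemF}(iv)) together with $\Psi'(y) = -\varphi(y)$ and the identity $F(x) = \varphi(x) - x\Psi(x)$, a direct differentiation collapses to
\begin{equation*}
g'(x) = (2x - \sqrt{2r}M)\Psi(x),
\end{equation*}
so $g$ is strictly decreasing on $(0,x^*)$ and strictly increasing on $(x^*,\infty)$, with its unique minimum precisely at the right endpoint of the target interval.

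Next I would track the sign of $g$ at three convenient test points. The values $F(0)=1/\sqrt{2\pi}$ and $\Psi(0)=1/2$ give $g(0)=(2\sqrt{r}M-\sqrt{\pi})/(2\sqrt{\pi})$, which is strictly positive precisely under the standing hypothesis $M>\hat M=\sqrt{\pi}/(2\sqrt{r})$. At the point $\sqrt{2r}M$, which lies strictly to the right of $x^*$, the linear factor in $g$ vanishes and one is left with $g(\sqrt{2r}M) = -\Psi(\sqrt{2r}M) < 0$; combined with the strict monotonicity of $g$ on $(x^*,\infty)$ this forces $g(x^*)<0$. Finally, for any $x \geq \sqrt{2r}M$ the factor $\sqrt{2r}M-x$ is non-positive while $F\geq 0$ by Lemma~\ref{lemF}(i), hence $g(x) \leq -\Psi(x) < 0$ throughout $[\sqrt{2r}M,\infty)$.

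Putting the pieces together, the intermediate value theorem combined with strict monotonicity on $(0,x^*)$ produces a unique zero $x^{**} \in (0,x^*)$ of $g$, while the sign bounds on $[x^*,\infty)$ rule out any further zero. Setting $v(b) := x^{**}/\sqrt{2r}$ then delivers the unique positive solution of \eqref{eqnforv}, automatically lying in $(0,M/2)$. The main place where the hypothesis $M>\hat M$ is actually consumed is the strict positivity of $g(0)$; the upper bound $v(b)<M/2$ comes for free from the geometric fact that the unique minimum of $g$ sits at the right endpoint $x^*$, so that no asymptotic analysis of $g$ at infinity is needed. The only mildly delicate step is spotting the algebraic cancellation in the formula for $g'$; once that is in hand, the argument is a textbook IVT-with-monotonicity.
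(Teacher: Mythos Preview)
Your proof is correct and follows essentially the same approach as the paper's own proof: the paper works directly with $f(z):=\sqrt{2r}(M-z)F(\sqrt{2r}z)-\Psi(\sqrt{2r}z)$, while your substitution $x=\sqrt{2r}z$ gives $g(x)=f(x/\sqrt{2r})$, and both arguments proceed via the same derivative computation (yielding a unique minimum at $M/2$, resp.\ $x^*$), the same sign check at $0$ using $M>\hat M$, and the same negativity at $z=M$ (resp.\ $x=\sqrt{2r}M$) to pin down the unique zero in $(0,M/2)$.
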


\begin{proof}
Let $f(z):=\sqrt{2r}(M-z)F(\sqrt{2r}z) -\Psi(\sqrt{2r}z)$.
Then $f(0)=\frac{\sqrt{r}M}{\sqrt{\pi}}-\frac{1}{2}>0$ and 
$f(z)<0$ for $z\geq M$, so a solution to $f(z)=0$ exists in $(0,M)$ by continuity. Differentiation yields
$$
f'(z) =2r \Psi(\sqrt{2r} z ) (2z-M).
$$
Thus, $f$ is strictly decreasing up to $M/2$ and increasing for $z > \frac{M}{2}$. Consequently, since 
$f(M) < 0$ the solution must be unique and satisfy $v(b)\in(0, M/2)$.
\end{proof}

Inserting \eqref{der} in \eqref{nonlin} we get 
$$
\beta v \frac{1}{r^2M^2(1-p)^2} = \frac{v_{pp}}{2 } - \frac{v_p}{(1-p)} -r v v_p^2 = \frac{1}{2rM}\frac{1}{(1-p)^2} - r v  \frac{1}{r^2M^2(1-p)^2}
$$
which yields the candidate equilibrium stopping intensity
\begin{equation*} %\label{eq:beta}
\beta(p) = 
\frac{rM}{2v(p)} - r 
\end{equation*}
for $ b<p<a$. Note that since $v$ is decreasing on $(b,a)$ by \eqref{vinba}, 
\[\beta(p)=\frac{rM}{2v(p)} - r \geq \frac{rM}{2v(b)} - r >0\]
by Lemma~\ref{lem}.

\subsection{The candidate msNE} \label{sec:candidate-msNE}
We now summarize by describing our candidate for a msNE and the corresponding value functions for the case 
\begin{equation}
\label{Mbig}
M > \hat M=\frac{\sqrt{\pi}}{2\sqrt r}.
\end{equation}

Let $v(b)$ be the unique positive solution of 
\begin{equation}\label{v(b)}
\sqrt{2r}(M-v(b))F(\sqrt{2r}v(b)) -\Psi(\sqrt{2r}v(b))=0,
\end{equation}
cf. Lemma~\ref{lem}, and let 
\begin{equation}
\label{bdef}
b:=\frac{\sqrt rM\Psi(\sqrt{2r}v(b))}{\sqrt{2}\varphi(\sqrt{2r}v(b))}
\end{equation}
(in Section~\ref{propertiesr} below we verify that $b<1$)
and
\[a:=1-(1-b)e^{-rMv(b)},\]
cf. \eqref{br} and \eqref{vinba}, respectively.
Define $v$ (the candidate equilibrium value function for the fraudster in the interim version of the game) by
\begin{align} \label{v-mixed}
\begin{cases} 
v(p)= \frac{1}{\sqrt{2r}}\Phi^{-1}\left(1- \frac{(1-b)p}{b(1-p)} \left(1-\Phi(\sqrt{2r}v(b))\right)\right) & 0 < p \leq b\\
v(p)= \frac{1}{rM} \ln\frac{1-p}{1-b}    &  b<p \leq a \\
v(p)= 0 & p>a
\end{cases}
\end{align}
and $u$ (the candidate equilibrium value function for the account holder) by
\begin{align} \label{u-mixed}
u(p) = \begin{cases}
pv(p)+(M-bv(b))\frac{ (1-p)  F(\sqrt{2r}v(p))}{ (1-b)F(\sqrt{2r} v(b))} & 0 < p \leq  b \\
 M & p>b.
\end{cases}
\end{align}
Set
\[\beta(p)=\left\{\begin{array}{ll} 0 & 0 < p \leq b\\
 \frac{rM}{2v(p)} - r & b<p<a\\
\infty & p \geq a\end{array}\right.\]
and 
\[\lambda^\ast(p)=\left\{\begin{array}{ll} \frac{-1}{p(1-p)v_p(p)} & 0 < p \leq b\\
\frac{rM}{p}& b<p<1.\end{array}\right.\]
Furthermore, let $(X,P)=(X^{\Lambda},P^{\Lambda})$ for $\Lambda\in \mathbb L$ be defined by
\begin{equation*}%\label{eq:XP-equation}
\left\{\begin{array}{ll}
dX_t=-\theta d\Lambda_t + dW_t\\
dP_t=- \lambda^\ast(P_t)P_t (1-P_t) (dX_t + \lambda^\ast(P_t) P_t \,dt) \end{array}\right.
\end{equation*}
with $X_0=0$ and $P_0=p$.
Our candidate msNE $(\gamma^\ast,\Lambda^\ast)$ is then given by 
\begin{align}
\gamma^\ast = \inf	\{t\geq 0:  \Gamma^\ast_t> U\} \label{msNE-stopping-strategy}
\end{align}
with $\Gamma^\ast_t:=\int_0^t\beta(P_s)\,ds$ and $U \sim Exp(1)$, and
\begin{align} 
\Lambda^\ast_t= \int_0^t\lambda^\ast(P_s)\,ds. \label{msNE-fraud-strategy}
\end{align}

\begin{figure}[h]
  \subfigure[]{\includegraphics[width=0.45\textwidth]{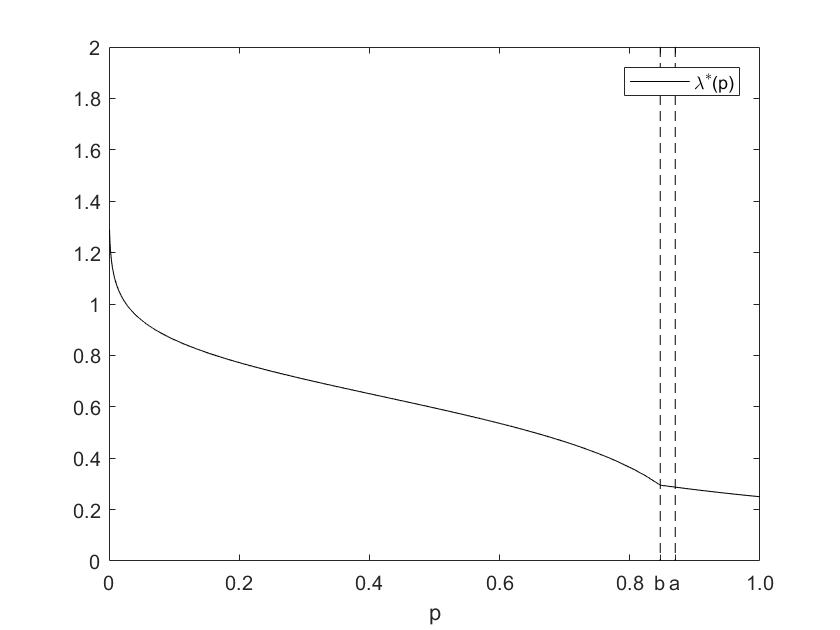}\label{fig:lambda_mixed}}
  \subfigure[]{\includegraphics[width=0.45\textwidth]{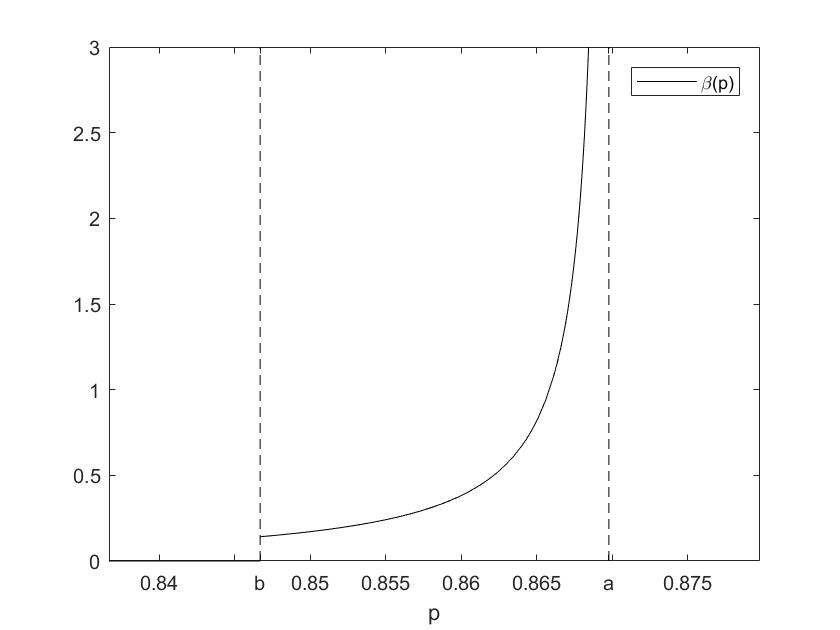}\label{fig:beta}}
\caption{
%The equilibrium fraudster intensity (left) and the equilibrium stopping intensity $\beta$ (right). 
%
The intensity of the equilibrium fraudster strategy $\lambda^*$ and  
the intensity of the account holder equilibrium stopping strategy $\beta$ for the msNE, cf. Section \ref{sec:candidate-msNE}. The parameters are $r=0.05$ and $M=5$ (note that $M>\hat M\approx 3.96$).}
\end{figure}

For graphical illustrations of $\lambda^\ast$, $\beta$, and the corresponding equilibrium value functions, see Figures~\ref{fig:lambda_mixed}-\ref{fig:beta} and \ref{fig:uv_mixed}.

\subsection{Properties of the candidate msNE}
\label{propertiesr}

Propositions~\ref{thm:existence-etc} and \ref{thm:filter-etc} hold also in the current case.
In this section we show that also $b<1$ and $u\leq M$ hold.

\begin{lemma}
Assume that \eqref{Mbig} holds.
Then the value $b$ defined in \eqref{bdef} satisfies $b<1$.
\end{lemma}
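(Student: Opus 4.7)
The strategy is to substitute the defining equation for $v(b)$ into \eqref{bdef} and reduce $b<1$ to a sharp analytic inequality involving the Mill's ratio $m(s):=\Psi(s)/\varphi(s)$.

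First I set $s:=\sqrt{2r}\,v(b)$ and rewrite \eqref{v(b)} in the equivalent form $\sqrt{2r}\,M = s + \Psi(s)/F(s)$. Substituting this into \eqref{bdef} yields
\[ b \;=\; \frac{\sqrt{r}\,M\,\Psi(s)}{\sqrt{2}\,\varphi(s)} \;=\; \frac{\bigl(s+\Psi(s)/F(s)\bigr)\Psi(s)}{2\varphi(s)}. \]
Multiplying both sides of the inequality $b<1$ by $2\varphi(s)F(s)>0$ and expanding $F(s)=\varphi(s)-s\Psi(s)$, after collecting terms I would see that $b<1$ is equivalent to the quadratic inequality
\[ 2\varphi(s)^2 \;-\; 3s\,\varphi(s)\Psi(s) \;+\; (s^2-1)\Psi(s)^2 \;>\; 0. \]

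Next I would establish this inequality, which is a purely analytic statement about the Gaussian functions. Dividing by $\varphi(s)^2>0$ turns it into $(s^2-1)m(s)^2-3sm(s)+2>0$; equivalently, setting $w:=1/m(s)$, this becomes $2w^2-3sw+(s^2-1)>0$. The roots of this quadratic in $w$ are $(3s\pm\sqrt{s^2+8})/4$, so it suffices to show that $1/m(s)$ exceeds the larger root, i.e.\ that
\[ m(s) \;<\; \frac{4}{3s+\sqrt{s^2+8}} \qquad\text{for every } s\ge 0. \]
This is a sharp strengthening of the classical bound $m(s)<1/s$. I would prove it by introducing $\phi(s):=4\varphi(s)-(3s+\sqrt{s^2+8})\,\Psi(s)$, verifying $\phi(0)=4/\sqrt{2\pi}-\sqrt{2}>0$ and $\phi(s)\to 0$ as $s\to\infty$, and ruling out an interior zero of $\phi$ via a monotonicity/sign analysis based on the Riccati-type identity $m'(s)=sm(s)-1$ (which follows immediately from $\Psi'=-\varphi$ and $\varphi'=-s\varphi$).

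The main obstacle is the sharpness of the final inequality: one can check that $b\to 1^-$ as $M\to\infty$ (equivalently $s\to\infty$), so crude Mill's-ratio estimates like $m(s)<1/s$ do not suffice, and the Komatu/Baricz-type bound $m(s)<4/(3s+\sqrt{s^2+8})$ is essentially the weakest tool that works. The reduction in the first paragraph is elementary once the relation $\sqrt{2r}\,M = s + \Psi(s)/F(s)$ is exploited; the analytic verification of this sharp bound is the delicate step.
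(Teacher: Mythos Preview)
Your reduction is correct and coincides exactly with the paper's: with $s=\sqrt{2r}\,v(b)$ you obtain $b=\frac{(sF(s)+\Psi(s))\Psi(s)}{2\varphi(s)F(s)}$, and the condition $b<1$ becomes
\[
2\varphi(s)^2-3s\varphi(s)\Psi(s)+(s^2-1)\Psi(s)^2>0,
\]
which the paper writes (after dividing by $\Psi(s)^2$) as $2\bigl(\varphi/\Psi\bigr)^2-3s\,\varphi/\Psi+s^2>1$; this is the classical Sampford inequality.

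The only difference is in how the final analytic inequality is verified. You apply the quadratic formula in $w=\varphi/\Psi$ and recast the statement as the sharp Mill's-ratio bound $m(s)<4/(3s+\sqrt{s^2+8})$, then sketch a proof of the latter. The paper instead uses Sampford's self-contained argument: it factors $f(s):=2(\varphi/\Psi)^2-3s(\varphi/\Psi)+s^2=(\varphi/\Psi-s)(2\varphi/\Psi-s)$, checks the limits $f(-\infty)=\infty$ and $f(+\infty)=1$, and derives the differential inequality $f'(s)<-\frac{\varphi(s)}{\Psi(s)}(1-f(s))$, from which $f>1$ follows by a short contradiction. The two routes are logically equivalent (your Mill's-ratio bound is literally the statement ``$w$ exceeds the larger root of $2w^2-3sw+(s^2-1)$''), but the paper's execution is complete and avoids the somewhat delicate sign analysis of $\phi(s)=4\varphi(s)-(3s+\sqrt{s^2+8})\Psi(s)$ that you leave as an outline. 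If you pursue your version, note that your boundary checks are correct, but the phrase ``ruling out an interior zero via a monotonicity/sign analysis'' still hides the substantive work; Sampford's derivative trick is probably the cleanest way to close that gap.
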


\begin{proof}
From \eqref{br} and \eqref{v(b)} we know that 
\begin{align*}
b=\frac{\sqrt{r}M\Psi(\sqrt{2r}v(b))}{\sqrt 2\varphi(\sqrt{2r}v(b))} =
\frac{ \left(\Psi(\sqrt{2r}v(b))+ \sqrt{2r}v(b)F(\sqrt{2r}v(b))\right) \Psi(\sqrt{2r}v(b))}{2F(\sqrt{2r}v(b))\varphi(\sqrt{2r}v(b))},
\end{align*}
so it suffices to check that
\[\Psi^2(z) + z\varphi(z)\Psi(z)-z^2\Psi^2(z)<2\varphi^2(z)-2z\varphi(z)\Psi(z),\]
or, equivalently, 
\begin{equation}
\label{mill}
2\frac{\varphi^2(z)}{\Psi^2(z)} - 3z\frac{\varphi(z)}{\Psi(z)}+ z^2>1
\end{equation}
for all $z>0$.
However, \eqref{mill} is a well-known inequality that holds for all $z\in\mathbb R$;
for completeness, we provide the following argument from \cite{S}.
Define
\[f(z):=2\frac{\varphi^2(z)}{\Psi^2(z)} - 3z\frac{\varphi(z)}{\Psi(z)}+ z^2=\left(\frac{\varphi(z)}{\Psi(z)}-z\right)\left(2\frac{\varphi(z)}{\Psi(z)}-z\right)\]
and note that $f(z)\to \infty$ as $z\to-\infty$ and $f(z)\to 1$ as $z\to\infty$ since 
\[\frac{\varphi(z)}{\Psi(z)}=z +z^{-1} + o(z^{-1})\]
as $z\to\infty$.
Moreover, 
\begin{eqnarray}
\label{fprop}
f'(z) &=& -2\left(\frac{\varphi(z)}{\Psi(z)}-z\right)\left(1+z\frac{\varphi(z)}{\Psi(z)}-\frac{\varphi^2(z)}{\Psi^2(z)}\right)-
\frac{\varphi(z)}{\Psi(z)}(1-f(z))\\
\notag
&<& - \frac{\varphi(z)}{\Psi(z)}(1-f(z)).
\end{eqnarray}
If $f(z_0)=1$ for some $z_0\in\mathbb R$, then there exists a $z\geq z_0$ with $f(z)\leq 1$ and 
$f'(z)=0$, which contradicts \eqref{fprop}. Consequently, $f>1$, which proves \eqref{mill}.
\end{proof}

\begin{figure}[htp!]\centering
\includegraphics[width=0.5\textwidth]{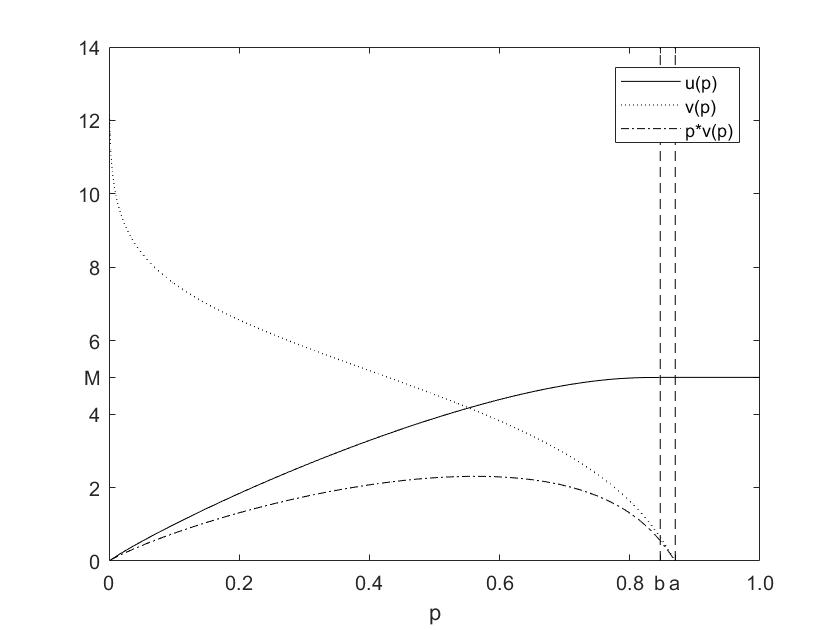}
\caption{
%The equilibrium value functions $u$ and $v$, and $pv$ (the latter corresponds to the equilibrium value function of the fraudster in the ex ante game). 
%aaaaaaaaaaaaa
The equilibrium value functions $u$ (account holder) and $v$ (fraudster, interim version of the game) for the msNE, cf. Section \ref{sec:candidate-msNE}. We also include the fraudster equilibrium value function $pv$ (msNE) in the ex ante game, cf. Remark~\ref{rem}.
The parameters are as in Figures~\ref{fig:lambda_mixed}-\ref{fig:beta}.}
\label{fig:uv_mixed}
\end{figure}

\begin{lemma}
The function $u$ defined in \eqref{u-mixed} (with $v$ defined in \eqref{v-mixed}) satisfies $u\leq M$.
\end{lemma}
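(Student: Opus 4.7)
The plan is to show that $u$ is concave on $(0,b)$, which combined with the smooth-fit condition $u_p(b)=0$ and $u(b)=M$ immediately gives $u(p)\le M$ for $p\in(0,b)$; for $p>b$ the claim is trivial since $u=M$ there by definition. Since $u$ satisfies the linear ODE $\tfrac{1}{2}(\lambda^\ast)^2 p^2(1-p)^2 u_{pp}-ru+p\lambda^\ast=0$ on $(0,b)$, concavity is equivalent to the pointwise inequality $r u(p)\le p\lambda^\ast(p)$ on $(0,b)$. This parallels the verification in the pure case (cf.~Lemma~\ref{ubound}), but the argument is more delicate because $v(b)>0$ now.

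To establish $ru\le p\lambda^\ast$, I would set $\phi(p):=\sqrt{2r}\,v(p)$ so that $\Psi(\phi(p))=\frac{(1-b)p}{b(1-p)}\Psi(\phi(b))$ on $(0,b]$, and plug the explicit expressions for $u$, $v$, and $\lambda^\ast$ into $ru-p\lambda^\ast$. Writing $F(z)=\varphi(z)-z\Psi(z)$ and collecting terms, this reduces to an expression of the form
\[
ru-p\lambda^\ast=\frac{\sqrt{2r}\,b(1-p)}{(1-b)\Psi(\phi(b))}\Bigl[\,rv(b)(M-v(b))F(\phi(p))-\tfrac{1}{2}\phi(p)\Psi(\phi(p))\,\Bigr],
\]
provided one uses the identity
\[
b=r(M-v(b))(M-2bv(b)),
\]
which itself follows from the defining equations \eqref{v(b)} and \eqref{bdef} by substituting $F_b=\varphi_b-\sqrt{2r}v(b)\Psi_b$ into \eqref{v(b)} to obtain $\Psi_b/\varphi_b=\sqrt{2r}(M-v(b))/[1+2rv(b)(M-v(b))]$ and then comparing with \eqref{bdef}. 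The required inequality therefore reduces to
\[
2rv(b)(M-v(b))F(\phi(p))\le \phi(p)\Psi(\phi(p)),\qquad p\in(0,b],
\]
which by \eqref{v(b)} holds with equality at $p=b$ (since $\phi(b)\Psi(\phi(b))/F(\phi(b))=\sqrt{2r}v(b)\cdot\sqrt{2r}(M-v(b))=2rv(b)(M-v(b))$).

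Since $\phi$ is decreasing in $p$, the inequality on $(0,b]$ amounts to showing that $q(\phi):=\phi\Psi(\phi)/F(\phi)$ is non-decreasing on $[\phi(b),\infty)$. Writing $q=R/(1-R)$ with $R(\phi):=\phi\Psi(\phi)/\varphi(\phi)\in(0,1)$, this is equivalent to monotonicity of $R$, and a direct calculation gives $R'(\phi)\varphi(\phi)=(1+\phi^2)\Psi(\phi)-\phi\varphi(\phi)$. Hence it suffices to prove the Mills-type inequality $(1+\phi^2)\Psi(\phi)>\phi\varphi(\phi)$ for all $\phi>0$. The function $h(\phi):=(1+\phi^2)\Psi(\phi)-\phi\varphi(\phi)$ satisfies $h(0)=1/2>0$, $h(\phi)\to 0$ as $\phi\to\infty$, and $h'(\phi)=2(\phi\Psi(\phi)-\varphi(\phi))\le 0$ by the standard Mills bound, so $h$ is positive throughout.

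The main obstacle is the algebraic simplification leading to the compact form of $ru-p\lambda^\ast$ displayed above; in particular, recognizing that the constants in $u$ conspire, via the free-boundary identity $b=r(M-v(b))(M-2bv(b))$, to leave a single difference of the form $(A-b)F(\phi)-\tfrac{b}{2}\phi\Psi(\phi)$ with $A-b=br v(b)(M-v(b))\ge 0$. Once this structural reduction is in place, the residual inequality is a clean monotonicity statement about the inverse Mills ratio and is quickly dispatched.
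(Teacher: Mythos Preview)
Your proposal is correct and follows essentially the same approach as the paper: both reduce $u\le M$ to concavity via the pointwise inequality $p\lambda^\ast\ge ru$ on $(0,b)$, and both close with the Mills-type bound $(1+x^2)\Psi(x)\ge x\varphi(x)$. The only difference is organizational---you simplify $ru-p\lambda^\ast$ using the identity $b=r(M-v(b))(M-2bv(b))$ into a form involving $F(\phi(p))$, whereas the paper writes it in terms of $\varphi(\phi(p))$ and separately checks that the prefactor $A>0$; since $F=\varphi-\phi\Psi$ and $G(p)=\mathrm{const}\cdot R(\phi(p))$, the two final monotonicity statements are equivalent.
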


\begin{proof}
As in the proof of Lemma~\ref{ubound} it suffices to show that  $\lambda^\ast p-ru\geq 0$ for $p\in(0,b)$ and we therefore assume that $p\in(0,b)$ in this proof.
By definition of $v(b)$, $v$ and $F$ we have that 
$$\frac{\Psi(\sqrt{2r}v(p))}{\Psi(\sqrt{2r}v(b))} = \frac{1-b}{1-p}\frac{p}{b}$$  
and
$$\frac{F(\sqrt{2r}v(p))}{F(\sqrt{2r}v(b))} = 
\sqrt{2r}(M-v(b))\frac{\varphi(\sqrt{2r}v(p))-\sqrt{2r}v(p)\Psi(\sqrt{2r}v(p))}{\Psi(\sqrt{2r}v(b))}.$$ 
We obtain 
\begin{eqnarray*}
u(p)  &=& pv(p)+(M-bv(b))\frac{ (1-p)  F(\sqrt{2r}v(p))}{ (1-b)F(\sqrt{2r} v(b))}\\
& =& pv(p)+D\frac{1-p}{1-b}\frac{\varphi(\sqrt{2r}v(p))-\sqrt{2r}v(p)\Psi(\sqrt{2r}v(p))}{\Psi(\sqrt{2r}v(b))}\\
& =& pv(p)+D\frac{1-p}{1-b}\frac{\varphi(\sqrt{2r}v(p))}{\Psi(\sqrt{2r}v(b))}-D\frac{p}{b}\sqrt{2r}v(p)\\
& =& pv(p)\left(1-\sqrt{2r}\frac{D}{b}\right)+\frac{(1-p)\varphi(\sqrt{2r}v(p))D}{(1-b)\Psi(\sqrt{2r}v(b))},
\end{eqnarray*}
where $D:=\sqrt{2r}(M-v(b))(M-bv(b))$. 

Using the definition of $\lambda^\ast$ and differentiation of the expression for $v(p)$ we also find that
\begin{align*}
\lambda^\ast(p)p = \frac{-1}{(1-p)v_p(p)} = \frac{\sqrt{2r}(1-p)b\varphi(\sqrt{2r}v(p))}{(1-b)\Psi(\sqrt{2r}v(b))}.
\end{align*}
Hence,
\begin{eqnarray}
\label{expr}
 \lambda^\ast(p)p -ru(p)
& =& pv(p)r\left(\sqrt{2r}\frac{D}{b}-1\right)+(1-p)\varphi(\sqrt{2r}v(p))\frac{\sqrt{2r}b-rD}{(1-b)\Psi(\sqrt{2r}v(b))}\\
\notag & =& A\left(pv(p)+(1-p)\varphi(\sqrt{2r}v(p))B\right),
\end{eqnarray}
where $A:=r\left(\sqrt{2r}\frac{D}{b}-1\right)$ and 
\[B:=\frac{\sqrt{2r}b-rD}{(1-b)\Psi(\sqrt{2r}v(b))A}.\]
%(the form of $B$ can also be derived directly from \eqref{v(b)} and \eqref{bdef}).
Since $\lambda^\ast(b)b = ru(b)= rM$, it follows that \eqref{expr} evaluated at $p=b$ equals $0$, so that
\begin{align*}
\lambda^\ast(p)p -ru(p) = A\left(pv(p)-(1-p)\varphi(\sqrt{2r}v(p))\frac{bv(b)}{(1-b)\varphi(\sqrt{2r}v(b))}\right)
\end{align*}

Next,
\begin{eqnarray*} 
bA/r &=& \sqrt{2r}D-b\\
&=& 2r(M-v(b))(M-bv(b))- b \\
&=& \sqrt{2r}\frac{\Psi(\sqrt{2r}v(b))}{F(\sqrt{2r}v(b))}\left(M-   \frac{\sqrt rM\Psi(\sqrt{2r}v(b))}{\sqrt{2}\varphi(\sqrt{2r}v(b))}   v(b)\right)- \frac{\sqrt rM\Psi(\sqrt{2r}v(b))}{\sqrt{2}\varphi(\sqrt{2r}v(b))} \\
&=& \sqrt{2r}\frac{\Psi(\sqrt{2r}v(b))}{F(\sqrt{2r}v(b))}\left( M-   \frac{\sqrt rM\Psi(\sqrt{2r}v(b))}{\sqrt{2}\varphi(\sqrt{2r}v(b))}   v(b) -\frac{MF(\sqrt{2r}v(b))}{2\varphi(\sqrt{2r}v(b))}\right)\\
&=& \sqrt{2r}\frac{M\Psi(\sqrt{2r}v(b))}{2F(\sqrt{2r}v(b))}>0,
\end{eqnarray*}
so $A>0$. Thus we are done if we can show that 
\[G(p):=\frac{pv(p)}{(1-p)\varphi(\sqrt{2r}v(p))}\]
is a decreasing function (since this implies that $\lambda^\ast(p)p -ru(p)\geq 0$ for $0<p<b$).
To do that, differentiate to get
\begin{eqnarray*}
(1-p)^2\varphi^2(\sqrt{2r}v(p)) G_p(p) &=& v(p)\varphi(\sqrt{2r}v(p)) +p(1-p)\varphi(\sqrt{2r}v(p))\left( 1 +2rv^2(p)\right)v_p(p)\\
&=& v(p)\varphi(\sqrt{2r}v(p))-\left( 1 +2rv^2(p)\right)\frac{\Psi(\sqrt{2r}v(p))}{\sqrt{2r}},
\end{eqnarray*}
which is negative since $x\varphi(x)\leq (1+x^2)\Psi(x)$ for all $x$.
\end{proof}

\subsection{Verification for the mixed stopping Nash equilibrium}
\label{sec:verifymsNE}
In this section we prove that the candidate msNE $(\gamma^\ast,\Lambda^\ast)$ defined in Section \ref{sec:candidate-msNE} is indeed a msNE.

\begin{theorem}
\label{msNE}
{\bf (Verification.)} Assume that the account holder's cost for stopping satisfies $M>\hat M$. Then, the pair $(\gamma^\ast,\Lambda^\ast)$ defined in \eqref{msNE-stopping-strategy} and \eqref{msNE-fraud-strategy} is a msNE in the sense of Definition \ref{def:msNE}. Moreover, the corresponding equilibrium value functions for the account holder and the fraudster (in the interim version of the game) are given by \eqref{u-mixed} and \eqref{v-mixed}, i.e.,
\[
u(p)= \mathcal {\mathcal J}^1(\gamma^\ast,\Lambda^\ast;p)
\mbox{ and } 
v(p) = \hat{\mathcal J}^2(\gamma^\ast,\Lambda^\ast;p).
\]
\end{theorem}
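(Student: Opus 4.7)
The plan is to mirror the two-part verification in Theorem~\ref{main}. The new ingredient is the intensity representation for randomized stopping: for $\gamma=\gamma^\Gamma$ with $\Gamma\in\mathcal A$ absolutely continuous, $\Gamma_t=\int_0^t\beta_s\,ds$, conditioning on the full filtration gives survival probability $e^{-\Gamma_t}$, so that by Proposition~\ref{thm:filter-etc}
\[
\mathcal J^1(\gamma^\Gamma,\Lambda^\ast;p)=\E\!\left[\int_0^\infty e^{-rs-\Gamma_s}\bigl(\lambda^\ast(P_s)P_s+M\beta_s\bigr)ds\right],\qquad
\hat{\mathcal J}^2(\gamma^\ast,\Lambda;p)=\E\!\left[\int_0^\infty e^{-rs-\Gamma^\ast_s}d\Lambda_s\,\bigg|\,\theta=1\right].
\]

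For the account holder I would apply It\^{o} to $e^{-rt-\Gamma_t}u(P_t)$ along the filter dynamics $dP_t=-\lambda^\ast(P_t)P_t(1-P_t)\,d\hat W_t$, using the piecewise $C^2$, globally $C^1$ regularity of $u$ exactly as in Theorem~\ref{main}. The pointwise inequalities (i) $\tfrac{(\lambda^\ast)^2p^2(1-p)^2}{2}u_{pp}-ru+\lambda^\ast p\geq 0$ on $(0,b)\cup(b,1)$ --- holding with equality on $(0,b)$ by the defining PDE and reducing on $(b,1)$, where $u\equiv M$, to $\lambda^\ast(p)p\geq rM$, which is an identity on $(b,a)$ and nonnegative on $[a,1)$ --- and (ii) $u\leq M$ (Section~\ref{propertiesr}) together force $t\mapsto e^{-rt-\Gamma_t}u(P_t)+\int_0^t e^{-rs-\Gamma_s}(\lambda^\ast(P_s)P_s+M\beta_s)\,ds$ to be a submartingale. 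Localization and the passage $e^{-rT-\Gamma_T}u(P_T)\to 0$ (using $u\leq M$) then yield $u(p)\leq\mathcal J^1(\gamma^\Gamma,\Lambda^\ast;p)$. Equality for $\gamma^\ast$ follows because $\beta^\ast$ vanishes on $\{u<M\}$ while $u=M$ on $\{\beta^\ast>0\}$, and (i) is tight throughout $(0,b)\cup(b,a)$.

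For the fraudster I would work conditionally on $\{\theta=1\}$ where
\[
dP_t=\lambda^\ast(P_t)P_t(1-P_t)\,d\Lambda_t-(\lambda^\ast(P_t))^2P_t^2(1-P_t)\,dt-\lambda^\ast(P_t)P_t(1-P_t)\,dW_t,
\]
and apply It\^{o} to $e^{-rt-\Gamma^\ast_t}v(P_t)$. Using the construction PDE \eqref{opt} together with the first-order condition $1+\lambda^\ast p(1-p)v_p=0$, the absolutely continuous drift vanishes on $(0,a)$, and the coefficient of $d\Lambda_t$ reduces to $e^{-rt-\Gamma^\ast_t}(1+\lambda^\ast p(1-p)v_p)=0$ on $(0,a)$. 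Hence, with $\tau_a:=\inf\{t:P_t=a\}$, the process $N_t:=e^{-r(t\wedge\tau_a)-\Gamma^\ast_{t\wedge\tau_a}}v(P_{t\wedge\tau_a})+\int_0^{t\wedge\tau_a}e^{-rs-\Gamma^\ast_s}\,d\Lambda_s$ is a local martingale for \emph{every} $\Lambda\in\mathbb L$. Since $v(a)=0$ and $\Gamma^\ast_{\tau_a}=+\infty$ on $\{\tau_a<\infty\}$, optional sampling combined with monotone convergence gives $v(p)=\hat{\mathcal J}^2(\gamma^\ast,\Lambda;p)$ for every $\Lambda$, which is the indifference equality typical of mixed equilibria.

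The principal technical obstacle is the rigorous handling of the randomized stopping time: one must work either in the enlarged filtration $\mathcal F^X\vee\sigma(U)$ or entirely through the survival-probability representation above, and then control the limits near $p=0$ (where $v$ blows up but $P$ does not reach $0$ by Proposition~\ref{thm:existence-etc}, so localization is routine) and near $p=a$ (where $\beta^\ast\to\infty$ but is tamed by the vanishing of $e^{-\Gamma^\ast_t}$ at $\tau_a$). A secondary point is that the equality $v(p)=\E[N_{\tau_a}\mid\theta=1]$ for $\Lambda=\Lambda^\ast$ requires $\tau_a<\infty$ a.s.\ on $\{\theta=1\}$, which can be obtained by a time-change argument analogous to the Bessel-process step in the proof of Theorem~\ref{main}.
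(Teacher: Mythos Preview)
Your fraudster argument contains a genuine error: the claim that $v(p)=\hat{\mathcal J}^2(\gamma^\ast,\Lambda;p)$ for \emph{every} $\Lambda\in\mathbb L$ is false, and the appeal to ``indifference typical of mixed equilibria'' is misplaced. It is the account holder who randomizes, so indifference holds for her (and indeed \eqref{ineqmixed} is an equality throughout $(0,1)\setminus\{b\}$); the fraudster plays a pure strategy and is \emph{not} indifferent --- take $\Lambda\equiv 0$, which gives $\hat{\mathcal J}^2=0<v(p)$. Your process $N_t$ is a nonnegative local martingale, hence only a supermartingale, so optional sampling yields $v(p)\geq\E[N_t]\geq\E\bigl[\int_0^{t\wedge\tau_a}e^{-rs-\Gamma^\ast_s}\,d\Lambda_s\bigr]$ and thus $v(p)\geq\hat{\mathcal J}^2(\gamma^\ast,\Lambda;p)$, which is exactly what is needed, but equality must be proved separately for $\Lambda^\ast$ via a uniform-integrability argument (this is what your ``secondary point'' should be upgraded to). Relatedly, the assertion $\Gamma^\ast_{\tau_a}=+\infty$ on $\{\tau_a<\infty\}$ is not justified: near $a$ one has $\beta(p)\sim c/(a-p)$ while a diffusion typically hits $a$ with $a-P_t\sim\sqrt{\tau_a-t}$, so $\int_0^{\tau_a}\beta(P_s)\,ds$ may well be finite; fortunately this claim is not needed once you abandon the indifference route.

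Your account-holder argument is correct in spirit but, as written, covers only absolutely continuous $\Gamma$. Since $\mathbb T^r$ contains all $\Gamma\in\mathcal A$ (in particular pure stopping times, corresponding to a single jump of $\Gamma$ to $+\infty$), this is a gap. The paper avoids the issue by a different and cleaner device: it first establishes $u(p)\leq\mathcal J^1(\tau,\Lambda^\ast;p)$ for every \emph{pure} $\F^X$-stopping time $\tau$ using It\^{o} on $e^{-rt}u(P_t)$ (no $\Gamma$ in the exponent), and then for an arbitrary $\gamma^\Gamma\in\mathbb T^r$ writes $\gamma(c):=\inf\{t:\Gamma_t>c\}\in\mathbb T$ and integrates, $\mathcal J^1(\gamma,\Lambda^\ast;p)=\int_0^\infty e^{-c}\,\mathcal J^1(\gamma(c),\Lambda^\ast;p)\,dc\geq u(p)$. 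This handles all $\Gamma\in\mathcal A$ without any regularity assumption. Your survival-probability approach can be repaired by replacing $\beta_s\,ds$ with the Stieltjes differential $d\Gamma_s$ and treating jumps of $e^{-\Gamma}$ carefully, but you would need to spell this out.
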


\begin{proof}
\textbf{Optimality of $\gamma^\ast$.}
First note that 
\begin{equation}
\label{ineqmixed}
\frac{(\lambda^\ast)^2 p^2(1-p)^2}{2} u_{pp} -r u +\lambda^\ast p= 0
\end{equation}
for $p\in(0,1)\setminus\{b\}$. In fact, \eqref{ineqmixed} holds with equality on $(0,b)$ by construction, and for $p\geq b$ we have $u\equiv M$ so
\[\frac{(\lambda^\ast)^2 p^2(1-p)^2}{2} u_{pp} -r u +\lambda^\ast p=-rM+rM=0.\]
Consequently, with $\Lambda^\ast$ fixed, It\^{o}'s formula gives 
\[e^{-rt}u(P_t)=u(p)- \int_0^t e^{-rs}\lambda^\ast(P_s)P_s\,ds+\int_0^t e^{-rs}\frac{u_p(P_s)}{v_p(P_s)}\,d\hat W_s,\]
where we note that the stochastic integral is a martingale.
Now fix an $\F^X$-stopping $\tau\in\mathbb T$. By optional sampling, 
\begin{eqnarray*}
u(p) &=& \E\left[e^{-r\tau\wedge t}u(P_{\tau\wedge t})+ \int_0^{\tau\wedge t} e^{-rs}\lambda^\ast(P_s)P_s\,ds\right] \\
&=& \E\left[e^{-r\tau\wedge t}u(P_{\tau\wedge t})+ \theta\int_0^{\tau\wedge t} e^{-rs}\lambda^\ast(P_s)\,ds\right] \\
&\leq& \E\left[e^{-r\tau\wedge t}M+ \theta\int_0^{\tau\wedge t} e^{-rs}\lambda^\ast(P_s)\,ds\right],
\end{eqnarray*}
where the second equality follows using iterated expectations since $P_t  =\E\left[\theta \vert \,  \F^X_t\right]$, see Proposition~\ref{thm:filter-etc}.
Letting $t\to\infty$ gives 
\[u(p)\leq \E\left[e^{-r\tau}M+ \theta\int_0^{\tau} e^{-rs}\lambda^\ast(P_s)\,ds\right]=\mathcal J^1(\tau,\Lambda^\ast;p)\]
by monotone convergence. 

Now consider a randomized stopping time $\gamma\in\mathbb T^r$ generated by $\Gamma\in\mathbb A$, 
and let $\gamma(c):= \inf\{t\geq 0 : \Gamma_t > c\}$ for $c \in [0, \infty)$ (so that $\gamma=\gamma(U)$).
Then $\gamma(c)\in\mathbb T$, and 
\[\mathcal J^1(\gamma,\Lambda^\ast;p) = \int^\infty_0 e^{-c}\mathcal J^1(\gamma(c),\Lambda^\ast;p)\,dc\geq \int^\infty_0 e^{-c}u(p)\,dc = u(p),\]
so
\[u(p)\leq \inf_{\gamma\in\mathbb T^r}\mathcal J^1(\gamma,\Lambda^\ast;p).\]

For the reverse inequality, consider $\gamma=\gamma^\ast$. Since $u\left(P_{\gamma^\ast(c)}\right)=M$ on the event $\{\gamma^\ast(c)<\infty\}$,
we have for $c\geq 0$ that
\begin{eqnarray*}
u(p) &=& \E\left[e^{-r\gamma^\ast(c)\wedge t}u(P_{\gamma^\ast(c)\wedge t})
+ \int_0^{\gamma^\ast(c)\wedge t} e^{-rs}\lambda^\ast(P_s)P_s\,ds\right]\\
&\geq& \E\left[e^{-r\gamma^\ast(c)}M1_{\{\gamma^\ast(c)\leq t\}}
+ \theta\int_0^{\gamma^\ast(c)\wedge t} e^{-rs}\lambda^\ast(P_s)\,ds\right]\\
&\to& 
\E\left[e^{-r\gamma^\ast(c)}M+ \int_0^{\gamma^\ast(c)} e^{-rs}\lambda^\ast(P_s)\,ds\right]  = \mathcal J^1(\gamma^\ast(c),\Lambda^\ast;p)
\end{eqnarray*}
as $t\to\infty$ by monotone convergence. Consequently, 
\[\mathcal J^1(\gamma^\ast,\Lambda^\ast;p) = \int^\infty_0 e^{-c}\mathcal J^1(\gamma^\ast(c),\Lambda^\ast;p)\,dc \leq u(p),\]
so 
\[u(p)=\mathcal J^1(\gamma^\ast,\Lambda^\ast;p) =\inf_{\gamma\in\mathbb T^r}\mathcal J^1(\gamma,\Lambda^\ast;p).\]

{\bf Optimality of $\Lambda^\ast$.}
Let us now fix the stopping strategy $\gamma^\ast$ generated by 
$$\Gamma_t  = \int_0^t \beta(P_s) \,ds,$$ 
where 
\[\left\{\begin{array}{ll}
dX_t=-\theta d\Lambda_t + dW_t\\
dP_t=- \lambda^\ast(P_t)P_t (1-P_t) (dX_t + \lambda^\ast(P_t) P_t \,dt). \end{array}\right.\]
If the fraudster uses a strategy $\Lambda$, the process $P$ satisfies
\begin{equation*} %\label{eq:fraudsterP2}
dP_t = -\lambda^\ast(P_t)P_t (1-P_t) (-d\Lambda_t+ \lambda^\ast(P_t)P_t\, dt) - \lambda^\ast(P_t) P_t (1-P_t )\, dW_t
\end{equation*}
on the event $\{\theta=1\}$.
We then want to show that $ \Lambda^\ast$ is an optimal response for the fraudster. More precisely, we want to show 
$$
\hat{\mathcal J}^2(\gamma^\ast,\Lambda^\ast;p) :=  \E \left [ \int _0^{\gamma^\ast} e^{-rs} d\Lambda_s \vert\theta=1 \right ] \geq \hat{\mathcal J}^2(\gamma^\ast,\Lambda;p) 
$$
for any fraud strategy $\Lambda\in\mathbb L$.

To do that, first define a process $N$ by 
\[N_t:=e^{-rt}v(P_{t})(1-Q_t)+ \int^{t\wedge\gamma^\ast}_0e^{-rs}d\Lambda_s,\]
where 
\[Q_t:=1_{\{t\geq \gamma^\ast\}}\]
is a jump process with intensity $\beta(P_t)$.
Note that 
\begin{eqnarray*}
dN_t &=&  1_{\{t\leq\gamma^\ast\}}e^{-rt}\left(\frac{1}{2}(\lambda^\ast(P_t))^2P_t^2(1-P_t)^2v_{pp}(P_t) -(\lambda^\ast(P_t))^2P_t^2(1-P_t)v_{p}(P_t) -rv(P_t)\right)dt\\
&& + 1_{\{t\leq\gamma^\ast\}}e^{-rt}\left(1+\lambda^\ast(P_t)P_t(1-P_t)v_p(P_t)\right)\,d\Lambda_t-e^{-rt}v(P_t)\,dQ_t + 1_{\{t\leq\gamma^\ast\}}e^{-rt}\,dW_t \\
&=& 1_{\{t\leq\gamma^\ast\}}e^{-rt} v(P_t)  (\beta(P_t) \,dt-dQ_t) + 1_{\{t\leq\gamma^\ast\}}e^{-rt}\,dW_t
\end{eqnarray*}
so $N$ is a lower bounded local martingale. Consequently, 
\[v(p)=N_0\geq \E [N_t]\geq \E\left[\int^{t\wedge\gamma^\ast}_0e^{-rs}d\Lambda_s\right],\]
so by monotone convergence we find that $v(p)\geq \sup_{\Lambda}\hat{\mathcal J}^2(\gamma^\ast,\Lambda;p)$.

On the other hand, consider the case $\Lambda =\Lambda^\ast$. 
Note that $Z_t:=e^{-rt}v(P_t)+ \int^{t}_0e^{-rs}d\Lambda^\ast_s$ satisfies
\[dZ_t = e^{-rt}\beta(P_t)v(P_t)\,dt -e^{-rt}\,dW_t\]
for $t<\tau_a:=\inf\{t\geq 0:P_t\geq a\}$.
Since $\beta v$ is bounded, we thus have 
\[Z_t\leq v(p) + C + \int_0^t e^{-rs}\,dW_s\]
for some $C>0$, which shows that $\{Z_t,t\leq \tau_a\}$ is uniformly integrable. Consequently, 
\[N_t:=e^{-rt}v(P_{t})(1-Q_t)+ \int^{t\wedge\gamma^\ast}_0e^{-rs}d\Lambda^\ast_s\]
is a martingale, and optional sampling gives
\[v(p)=\E [N_t]= \E\left[ e^{-rt}v(P_{t})1_{\{t<\gamma^\ast\}}+ \int^{t\wedge\gamma^\ast}_0e^{-rs}d\Lambda^\ast_s \right].\]
The arguments used in Theorem~\ref{main} show that $\gamma^\ast<\infty$ a.s. Consequently, by uniform integrability,
\begin{equation*}
\label{transv}
\lim_{t\to\infty}\E\left[e^{-rt}v(P_t)1_{\{t<\gamma^\ast\}}\right]=0.
\end{equation*}
Therefore, monotone convergence yields 
\[v(p)=\E\left[\int^{\gamma^\ast}_0e^{-rs}d\Lambda^\ast_s\right]= \hat{\mathcal J}^2(\gamma^\ast,\Lambda^\ast;p).\]
Thus 
\[ \hat{\mathcal J}^2(\gamma^\ast,\Lambda^\ast;p)=v(p)\geq \sup_{\Lambda}\hat{\mathcal J}^2(\gamma^\ast,\Lambda;p),\]
which completes the proof.
\end{proof}

\begin{remark}
As in the case with a pure NE described in Section~\ref{sec:pure}, the specification of $\lambda^\ast$ on $(a,1)$ can be done in several ways; we have chosen to use the same formula on $(a,1)$ as on $(b,a)$.
%Moreover, any fraud strategy $\Lambda$ is an optimal response to $\gamma^\ast$ as long as it makes $\gamma^\ast$ finite a.s., and the stop intensity $\beta$ can be changed on $(b,a)$ without destroying the optimality of the stopping strategy; however, the pair of strategies obtained after any such alteration would no longer be a msNE.
\end{remark}

\end{document}